\newtheorem{theorem}{Theorem}[section]
\newtheorem{lemma}{Lemma}[section]
\newtheorem{corollary}{Corollary}[section]
\theoremstyle{remark}
\newtheorem{remark}{Remark}[section]
\theoremstyle{definition}
\newtheorem{definition}{Definition}[section]
\title{Multivalued backward stochastic differential equations with jumps and moving boundary}
\author{Badr ELMANSOURI\footnote{Faculty of Sciences Agadir, Ibn Zohr University,
		Laboratory of Analysis and Applied Mathematics (LAMA),
		Hay Dakhla, BP8106, Agadir, Morocco.} \footnote{Email: \url{badr.elmansouri@edu.uiz.ac.ma}}  \and Anas OUKNINE\footnote{Laboratoire de Mathématiques Raphaël Salem, University of Rouen, UMR CNRS 6085, 
		Avenue de l'Université, 76801 Saint-Étienne-du-Rouvray, France.} \footnote{Email: \url{ anas.ouknine@univ-lemans.fr}}  \and Youssef OUKNINE\footnote{Department of Mathematics, Faculty of Sciences Semlalia, Cadi Ayyad University, B.P. 2390, Marrakesh, Morocco.} \footnote{Africa Business School, Mohammed VI Polytechnic University, Lot 660, Hay Moulay Rachid,
		Ben Guerir 43150, Morocco.}     \footnote{Emails: \url{ouknine@uca.ac.ma} \& \url{youssef.ouknine@um6p.ma}} }
\begin{document}
\maketitle
\begin{abstract}
	We prove existence and uniqueness for a one-dimensional multivalued backward stochastic differential equation with jumps. The equation involves a time-indexed family of maximal monotone operators $k_t(\cdot)$ associated with increasing functions $k(t,\cdot)$ taking values in $\mathbb{R}_-$ and having domains that are intervals with time-dependent boundaries. Existence is obtained by a penalization method under a Lipschitz condition on the driver in $(y,z)$, a monotonicity condition in the jump parameter $\psi$, square-integrability of the terminal condition and the driver, and local-in-time integrability conditions on $k(\cdot,y)$. We also address the extension to the case where the operators $k_t(\cdot)$ act on unbounded intervals.
\end{abstract}
\bigskip
\noindent
\textbf{MSC 2020:} 
60H05;
60H10;  % Stochastic ordinary differential equations
60H30;  % Applications of stochastic analysis 
47H05.  % Monotone operators and generalizations.

\noindent
\textbf{Keywords:} Multivalued BSDEs, Poisson jumps, maximal monotone operator, penalization.

\section{Introduction}
The notion of backward stochastic differential equations (BSDEs) was introduced in its linear form by Bismut \cite{bismut1973conjugate} in 1973 as the adjoint equations associated with stochastic Pontryagin maximum principles in control theory. The general nonlinear case was first studied by Pardoux and Peng \cite{pardoux1990adapted} in 1990. For a given terminal time $T\in(0,+\infty)$, a terminal condition $\xi$, and a driver $f(\omega,t,y,z)$, they proved the existence and uniqueness of an adapted pair of processes $(Y_t,Z_t)_{t\in[0,T]}$ satisfying
\begin{equation}\label{01}
	Y_t = \xi + \int_t^T f(s,Y_s,Z_s)ds - \int_t^T Z_s dW_s,
	\quad t\in[0,T],
\end{equation}
where $W=(W_t)_{t\in[0,T]}$ is a standard Brownian motion. Their proof relies on the martingale representation property in the filtration generated by $W$, under a Lipschitz condition on the driver $f$ and square‐integrability assumptions on $\xi$ and the process $(f(t,0,0))_{t\in[0,T]}$. Since then, interest in BSDEs has grown substantially due to their wide‐ranging applications, notably in mathematical finance \cite{el1997backward}, in providing probabilistic representations of solutions for quasilinear partial differential equations \cite{pardoux1998backward,Pardoux1999,PardouxPeng1992}, and in stochastic control and differential games \cite{HamadeneLepeltier1995,HamadeneLepeltierPeng1997}.

The extension to the case with jumps, where the filtration $\mathbb{F}=(\mathcal{F}_t)_{t\in[0,T]}$ is generated by the Brownian motion $W$ and an independent Poisson random measure $N(dt,de)$ on $[0,T] \times \mathcal{U}$ where $\mathcal{U} := \mathbb{R}^k \setminus \{0\}$ ($k \geq 1$), has been studied by several authors \cite{Delong2014,rong1997solutions,royer2006backward,TangLi1994}. In this setting, BSDE \eqref{01} takes the form
\begin{equation}\label{eq02}
	Y_t = \xi + \int_t^T f(s,Y_s,Z_s,\psi_s)ds 
	-\int_t^T Z_sdW_s 
	-\int_t^T\int_{\mathcal{U}} \psi_s(e)\tilde{N}(ds,de),
	\quad t\in[0,T].
\end{equation}
Such equations arise naturally in various applications: in finance, for pricing contingent claims in jump–diffusion models extending the classical Black–Scholes framework \cite{Situ2005}; in dynamic risk measurement and utility‐maximization problems \cite{Becherer2006,Morlais2010,royer2006backward}; and in the probabilistic representation of solutions to partial differential equations \cite{BarlesBuckdahnPardoux1997,Pardoux1997}.

The aim of this paper is to establish existence and uniqueness results for multivalued BSDEs (MBSDEs) analogous to those obtained for BSDEs of the form \eqref{eq02}. Here, we consider a time‐indexed family of multivalued maximal monotone operators on $\mathbb{R}$, denoted by $\{k_t(\cdot)\colon t\in[0,T]\}$, associated with a family of right‐continuous increasing functions $\{k(t,\cdot)\colon t\in[0,T]\}$. Recall that maximal monotone operators on $\mathbb{R}$ are precisely the subdifferentials of proper, lower semicontinuous convex functions (see Brézis \cite{Brezis1973}), or equivalently, multivalued operators generated by increasing functions on intervals with nonempty interior. In \cite{NziOuknine1997}, N’zi and Ouknine proved an existence and uniqueness result for an MBSDE driven by a single maximal monotone operator, with square‐integrable data and a Lipschitz driver, in the continuous Brownian setting. Their approach combines Yosida approximations of maximal operators with a backward Skorokhod problem. This result was extended by the same authors to continuous drivers with linear growth in \cite{NziOuknine1997b}, and by N’zi to the case of locally Lipschitz drivers and bounded terminal conditions \cite{Nzi1997,Nzi1997c}.

Following the progressive‐filtration framework of Marois \cite{marois1990equations}, we address the discontinuous case with Poisson jumps and a time‐indexed family of multivalued operators $k_t(\cdot)$, associated with increasing functions $k(t,\cdot)$ defined on intervals $\mathcal{D}_t$ with interior $[a_t,+\infty[$. Here, the lower boundary $a_t$ evolves continuously in time ($t\mapsto a_t$ is continuous on $[0,T]$), and the state process $Y$ of the MBSDE takes values in the moving domain $[a_t,+\infty[\cap\mathbb{R}$ at each time $t$. For the BSDE data, we assume a square‐integrable terminal condition $\xi\in[a_T,+\infty[$ and a driver $f$ that is Lipschitz in $(y,z)$, satisfies a monotonicity condition in $\psi$, and for which $(f(t,0,0,0))_{t\le T}$ is square‐integrable. The monotonicity in $\psi$ ensures a comparison theorem for our MBSDEs. Indeed, comparison can fail in general for discontinuous BSDEs of the form \eqref{eq02} (see the counterexample in Barles et al. \cite{BarlesBuckdahnPardoux1997}). Under an additional monotonicity assumption on $f$ with respect to $\psi$ (as in \cite{royer2006backward}), we establish a comparison principle. We first focus on the case where the functions $k(t,\cdot)$ take negative values, so that the graph of each operator lies in $\mathbb{R}\times\mathbb{R}_-$. We then discuss the general case and prove an existence result via localization and concatenation procedures. For related work, see \cite{essaky2004homogenization,Ouknine1998}.

The paper is organized as follows. In Section \ref{sec1}, we introduce our stochastic basis, formulate the problem, and state the main assumptions on the data $(\xi,f,k)$ for the MBSDE. In Section \ref{sec1-1}, we prove a uniqueness theorem and establish a comparison principle. Section \ref{sec2} is devoted to constructing a solution $(Y,Z,\psi,K)$ when the graphs of the operators $k_t(\cdot)$ lie in $\mathbb{R}\times\mathbb{R}_-$. Using a penalization method, we build a sequence of processes $\{Y^n,Z^n,\psi^n,K^n\}_{n\ge1}$ with $Y^n$ increasing and converging to $Y$. Under two local‐in‐time integrability conditions on $k(\cdot,x)$, we show that $(Z^n,\psi^n)$ converges in the appropriate Banach space to $(Z,\psi)$, and that $K$ is obtained as the limit of a subsequence of $K^n := -\int_0^T k_n\bigl(s,Y^n_s\bigr)ds$ where $k_n(t,\cdot)$ are Lipschitz approximations of $k(t,\cdot)$. Finally, in Section \ref{sec4}, we extend the existence and uniqueness results to the general case where $k(t,\cdot)$ takes real values. This extension is achieved by truncating the graph of $k_t(\cdot)$ and concatenating the corresponding local solutions, under an additional assumption on the  functions $k(t,\cdot)$.

\section{Preliminaries}
\label{sec1}
Let $(\Omega,\mathcal{F},\mathbb{P})$ be a complete probability space equipped with a $\mathbb{P}$-completed, right-continuous filtration $\mathbb{F}:=(\mathcal{F}_t)_{t \geq 0}$. Let
\begin{itemize}
	\item $(W_t)_{t \geq 0}$ be a one-dimensional standard Brownian motion.
	
	\item $\tilde{N}(dt,de)$ be an independent martingale measure associated with a standard Poisson random measure $N$ on $\mathbb{R}^{+} \times \mathcal{U}$, with Lévy (or characteristic) measure $\pi(de)$ \cite{IkedaWatanabe1989,Situ2005}, where $\mathcal{U} := \mathbb{R}^k \setminus \{0\}$ ($k \geq 1$) is equipped with its Borel $\sigma$-algebra $\mathscr{B}(\mathcal{U})$. Specifically, for any Borel measurable set $\Lambda \in \mathscr{B}(\mathcal{U})$ such that $\pi(\Lambda) < +\infty$, we define
	$$
	\tilde{N}((0,t] \times \Lambda) := N((0,t] \times \Lambda) - t \pi(\Lambda),
	$$
	where $\pi$ is a $\sigma$-finite measure on $(\mathcal{U}, \mathscr{B}(\mathcal{U}))$ satisfying
	$$
	\int_{\mathcal{U}} \left(1 \wedge |e|^2\right) \pi(de) < +\infty.
	$$
\end{itemize}

We assume that 
$$
\mathcal{F}_t = \sigma\left\{\int_{0}^{s} \int_A N(dr,du);\, s \in [0,t],\, A \in \mathscr{B}({\mathcal{U}})\right\} \bigvee \sigma\left\{W_s;\, s \in [0,t]\right\} \bigvee \mathcal{N},
$$
where $\mathcal{N}$ denotes the collection of $\mathbb{P}$-null sets in $\mathcal{F}$, and $\sigma_1 \vee \sigma_2$ denotes the $\sigma$-field generated by $\sigma_1 \cup \sigma_2$. We denote by $\mathcal{P}$ the predictable $\sigma$-field on $\Omega \times \mathbb{R}_+$.

By $\mathbb{L}^2_\pi$ we denote the space of $\mathscr{B}({\mathcal{U}})$-measurable functions $\varphi : \mathcal{U} \rightarrow \mathbb{R}$ such that
$$
\|\varphi\|^2_\pi:= \int_\mathcal{U}|\varphi(u)|^2 \pi(du)<+\infty.
$$

The following objects are given:
\begin{itemize}
	\item A terminal time $T \in (0,+\infty)$;
	
	\item A family of increasing, right-continuous mappings $\{k(t, \cdot)\}_{t \in [0,T]}$, taking negative values and defined on domains $\mathcal{D}_t$ with interior $\mathcal{D}^\circ_t := ]a_t; +\infty[$. For each $t \in [0,T]$, the boundary point $a_t$ is said to belong to $\mathcal{D}_t$ when
	$$
	a_t > -\infty \quad \text{and} \quad \lim\limits_{x \uparrow a_t} k(t,x) > -\infty.
	$$
	At each interior point $x \in \mathcal{D}^\circ_t$, the function $k(t,\cdot)$ admits a left-limit, denoted $k_-(t, x)$.
	
	Recall that maximal monotone operators on $\mathbb{R}$ are simply multivalued operators associated with increasing functions defined on intervals with non-empty interior (see \cite[Example 2.3.1, page 24]{Brezis1973}). For each $t \in [0,T]$, the maximal monotone operator $k_t(\cdot)$ associated with $k(t,\cdot)$, defined on $\mathcal{D}_t$, is given as follows:
	\begin{itemize}
		\item[$\ast$] For every $x \in \mathcal{D}^\circ_t = ]a_t; +\infty[$, we set
		$$
		k_t(x) = \left[k_-(t,x); k(t,x)\right].
		$$
		
		\item[$\ast$] If $a_t \in \mathcal{D}_t$, we define
		$$
		k_t(a_t) = \left]-\infty ; k(t,a_t)\right].
		$$
	\end{itemize}
	The functions $k(\cdot, x)$ are assumed to be measurable on $\mathbb{R}_+$. This assumption will be maintained throughout the rest of the paper.
	
	\item A terminal value $\xi$ and a function (driver or coefficient) $f : \Omega \times [0,T] \times \mathbb{R} \times \mathbb{R} \times \mathbb{L}^2_\pi \rightarrow \mathbb{R}$ satisfying the following assumptions (A):
	\begin{itemize}
		\item[(A.1)] $\xi$ is an $\mathcal{F}_T$-measurable, square-integrable random variable taking values in $[a_T; +\infty[ \; \cap \; \mathbb{R}$.
		
		\item[(A.2)] 
		\begin{itemize}
			\item[(i)] For any $(y,z,\psi) \in \mathbb{R} \times \mathbb{R} \times \mathbb{L}^2_\pi$, the mapping $\Omega \times [0,T] \ni (\omega,t) \mapsto f(\omega,t, y,z,\psi)$ is $\mathbb{F}$-progressively measurable;
			
			\item[(ii)] $\mathbb{E}\left[\int_{0}^{T} |f(t,0,0,0)|^2 dt \right] < +\infty$;
			
			\item[(iii)] There exists a constant $\widetilde{C} > 0$ such that, for any $(\omega, t) \in \Omega \times [0,T]$ and all $(y,y',z,z',\psi) \in \mathbb{R}^2 \times \mathbb{R}^2 \times \mathbb{L}^2_\pi$, we have
			$$
			\left|f(t,y, z, \psi) - f(t,y', z', \psi)\right| \leq \widetilde{C} \left(|y - y'| + |z - z'| \right);
			$$
			
			\item[(iv)] For each $(y, z, \psi, \phi) \in \mathbb{R} \times \mathbb{R} \times \mathbb{L}^2_\pi \times \mathbb{L}^2_\pi$, there exists a predictable process $\kappa = \kappa^{y, z, \psi, \phi} : \Omega \times [0, T] \times \mathcal{U} \rightarrow \mathbb{R}$ such that
			$$
			f(t, y, z, \psi) - f(t, y, z, \phi) \leq \int_{\mathcal{U}} (\psi(e) - \phi(e)) \kappa_t^{y, z, \psi, \phi}(e)\, \pi(de),
			$$
			with the following conditions holding $\mathbb{P} \otimes dt \otimes \pi$-a.e. for any $(y, z, \psi, \phi)$:
			\begin{itemize}
				\item $\kappa_t^{y, z, \psi, \phi}(e) \geq -1$,
				
				\item $\left|\kappa_t^{y, z, \psi, \phi}(e)\right| \leq \vartheta(e)$, where $\vartheta \in \mathbb{L}_\pi^2$.
			\end{itemize}
		\end{itemize}
	\end{itemize}
\end{itemize}

\begin{remark}
	If condition (A.2)-(iv) is true and by changing the role of $\psi$ and $\phi$ in $\kappa$, then we have
	$$
	f(t, y, z, \psi)-f(t, y, z, \phi) \geq \int_{\mathcal{U}}(\psi(e)-\phi(e)) \kappa_t^{y, z, \phi, \psi}(e) \pi({d} e)
	$$
Thus
$$
\left|f(t, y, z, \psi)-f(t, y, z, \phi)\right| \leq \| \vartheta \|_\pi \|\psi-\phi\|_\pi.
$$
\end{remark}
Following the above remark, we can deduce from (A.2)-(iii) that the driver $f$ is uniformly Lipschitz with respect to the variables $(y, z, \psi)$ with constant $C := \widetilde{C} \vee \| \vartheta \|_\pi$. To simplify notation, we will use the constant $C>0$ to denote the Lipschitz constant of $f$ throughout the remainder of the paper.

The problem consists in finding a quadruplet $(Y, Z, U, K):=(Y_t, Z_t, U_t, K_t)_{t \leq T}$ of $\mathbb{F}$-progressively measurable processes such that:
%\begin{equation}
%	\left\lbrace 
%	\begin{split}
%		\text{(i) }& Y_t = \xi + \int_t^T f(s, Y_s, Z_s, U_s)  ds  - \int_t^T Z_s dW_s-\int_t^T\int_{\mathcal{U}}U_s(u) \tilde{N}(ds,du)+ (K_T - K_t),\\
%	\end{split}
%\right. 
%\end{equation}

\begin{align}
	Y_t = \xi + \int_t^T f(s, Y_s, Z_s, \psi_s)\, ds - \int_t^T Z_s\, dW_s - \int_t^T\int_{\mathcal{U}} \psi_s(e)\, \tilde{N}(ds,de) + (K_T - K_t)&, \label{eq1} \\
	Y \text{ is an RCLL process taking its values at each time } t \in [0,T] \text{ in } \mathbb{R} \cap [a_t, +\infty[&, \label{eq2} \\
	K \text{ is continuous, increasing with } K_0 = 0&, \label{eq3} \\
	\begin{aligned}
		&\text{For every pair of optional processes } (\alpha_t , \beta_t) \text{ with values in } \operatorname{Gr}(k_t) \text{ for } t \in [0,T], \\
		&\text{the measure } (Y_t - \alpha_t)(dK_t + \beta_t dt) \text{ is a.s. negative on } [0,T].
	\end{aligned} \label{eq4}
\end{align}

Here, $\operatorname{Gr}(k_t)$ denotes the graph of the operation $k_t(\cdot)$ for $t \in [0,T]$ defined by 
$$
\operatorname{Gr}(k_t) := \left\{ (x, y) \in \mathbb{R} \times \mathbb{R} \;\middle|\;
\begin{array}{ll}
	y \in [k_-(t, x),\; k(t, x)] & \text{for } x \in \mathcal{D}_t^\circ = ]a_t, +\infty[, \\
	y \leq k(t, a_t) & \text{if } a_t \in \mathcal{D}_t \text{ and } x = a_t
\end{array}
\right\}.
$$

We denote by MBSDE$(\xi, f, k)$ the multivalued backward stochastic differential equation (MBSDE) associated with $(\xi, f, k)$, as defined by equations \eqref{eq1}--\eqref{eq4}.

\begin{definition}\label{def}
We call a quadruplet of $\mathbb{F}$-progressively measurable processes $(Y, Z, U, K)$ a solution to the MBSDE$(\xi, f, k)$ if it satisfies \eqref{eq1}--\eqref{eq4} and the following:
$$
\mathbb{E}\left[\sup_{0 \leq t \leq T} |Y_t|^2 + \int_{0}^{T} \left( |Z_s|^2 + \|\psi_s\|^2_\pi \right) ds + | K_T|^2 \right] < +\infty.
$$
%$$
%\int_{0}^{T} \left( |Z_s|^2 + \|\psi_s\|^2_\pi \right) ds + \int_{0}^{T} d\|K\|_t < +\infty, \quad \mathbb{P}\text{-a.s.}
%$$
\end{definition}

\begin{remark}
	The formulation of BSDE \eqref{eq1} together with condition \eqref{eq4} parallels the reflected BSDE studied in the Brownian setting by El Karoui et al. \cite{el1997reflected} and in our stochastic basis by Essaky \cite{essaky2008reflected} and by Hamadène and Ouknine \cite{SOUK,hamadeene2016reflected}. Here, the lower obstacle $L=(L_t)_{t\le T}$ is given by the continuous function $L_t(\omega)=a_t$. Since the jumps of $Y$ are totally inaccessible and arise only from the Poisson random measure $N$, the process $K$ remains continuous. Moreover, the classical Skorokhod condition
	$$
	\int_0^T (Y_s - a_s)\,dK_s = 0\quad \text{a.s.,}
	$$
	is replaced by condition \eqref{eq4}.
\end{remark}
\begin{remark}
	Throughout this paper, $\mathfrak{C}$ will denote a positive constant that may vary from one line to another.
	Additionally, the notation $\mathfrak{C}_\gamma$ will be employed to emphasize the dependence of the constant $\mathfrak{C}$ on a specific set of parameters $\gamma$.
\end{remark}

\section{Uniqueness result and comparison principal}
\label{sec1-1}
To prove the uniqueness and comparison theorems, we begin by stating several auxiliary results.

We begin with the following lemma:
\begin{lemma}\label{lemma1}
	Let the quadruples $(Y^1, Z^1, \psi^1, K^1)$ and $(Y^2, Z^2, \psi^2, K^2)$ be two solutions of the MBSDE$(\xi, f, k)$. Then, the measure $\left(Y^1_t - Y^2_t\right)\left(dK^1_t - dK^2_t\right)$ is a.s. negative on $[0,T]$.
\end{lemma}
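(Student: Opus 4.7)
The idea is to apply the generalized Skorokhod condition \eqref{eq4} to the two solutions using a \emph{single} common test pair $(\alpha,\beta)$, so that upon subtracting the two resulting measure inequalities, the auxiliary $\beta\,dt$ contributions cancel. This is why I expect the proof to be short: once the right test pair is chosen, the result follows by a one‑line algebraic manipulation.

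Set $\alpha_t := \tfrac{1}{2}\bigl(Y^1_t+Y^2_t\bigr)$. By \eqref{eq2} each $Y^i_t$ lies in $[a_t,+\infty)\cap\mathbb{R}\subseteq\mathcal{D}_t$, and since $\mathcal{D}_t$ is an interval the midpoint $\alpha_t$ lies in $\mathcal{D}_t$ as well. The process $\alpha$ is optional. Take $\beta_t := k(t,\alpha_t)$, which belongs to $k_t(\alpha_t)$ by the very definition of $\operatorname{Gr}(k_t)$, so that $(\alpha_t,\beta_t)\in\operatorname{Gr}(k_t)$ for every $t$. Optionality of $\beta$ follows from the joint Borel measurability of $k$, itself a consequence of the right‑continuity of $k(t,\cdot)$ together with the $t$‑measurability of $k(\cdot,x)$ assumed in Section~\ref{sec1}.

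Applying \eqref{eq4} to $(Y^1,K^1)$ and to $(Y^2,K^2)$ with this common pair $(\alpha,\beta)$ gives, as signed measures on $[0,T]$,
$$(Y^1_t-\alpha_t)\bigl(dK^1_t+\beta_t\,dt\bigr)\le 0\quad\text{and}\quad(Y^2_t-\alpha_t)\bigl(dK^2_t+\beta_t\,dt\bigr)\le 0.$$
Since $Y^1_t-\alpha_t=(Y^1_t-Y^2_t)/2$ and $Y^2_t-\alpha_t=-(Y^1_t-Y^2_t)/2$, these rewrite as
$$(Y^1_t-Y^2_t)\bigl(dK^1_t+\beta_t\,dt\bigr)\le 0\quad\text{and}\quad(Y^1_t-Y^2_t)\bigl(dK^2_t+\beta_t\,dt\bigr)\ge 0.$$
Subtracting the second from the first cancels the $(Y^1_t-Y^2_t)\beta_t\,dt$ term and yields
$$(Y^1_t-Y^2_t)\bigl(dK^1_t-dK^2_t\bigr)\le 0,$$
which is precisely the claim. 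The only mildly delicate point is the measurable selection of $\beta$, but it is routine given the regularity of $k$ and the optionality of $Y^1,Y^2$; I do not anticipate any other obstacle.
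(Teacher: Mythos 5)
Your core idea is exactly the paper's: test \eqref{eq4} for both solutions against the common midpoint $\alpha_t=(Y^1_t+Y^2_t)/2$ with $\beta_t=k(t,\alpha_t)$, so that the $\beta_t\,dt$ contributions cancel upon subtraction. The algebra is correct. However, there is a genuine gap in the step where you justify that $(\alpha_t,\beta_t)$ is a valid test pair: the inclusion $[a_t,+\infty)\cap\mathbb{R}\subseteq\mathcal{D}_t$ is false in general. By the setup of Section~\ref{sec1}, $\mathcal{D}_t$ has interior $]a_t,+\infty[$ and the boundary point $a_t$ belongs to $\mathcal{D}_t$ only when $a_t>-\infty$ \emph{and} $\lim_{x\uparrow a_t}k(t,x)>-\infty$; otherwise $a_t\notin\mathcal{D}_t$ and $k(t,a_t)$ is not defined. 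Condition \eqref{eq2} only forces $Y^i_t\in[a_t,+\infty[$, so on the (possible) event $\{Y^1_t=Y^2_t=a_t\}$ with $a_t\notin\mathcal{D}_t$ your $\alpha_t$ equals $a_t\notin\mathcal{D}_t$ and the pair $(\alpha_t,\beta_t)$ fails to take values in $\operatorname{Gr}(k_t)$. Since \eqref{eq4} requires the test pair to be $\operatorname{Gr}(k_t)$-valued for \emph{all} $t\in[0,T]$, you cannot invoke it with your choice. The delicate point is therefore not the measurable selection of $\beta$, as you suggest, but the domain of $k(t,\cdot)$ at the moving boundary.

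The repair is exactly what the paper does. Note that whenever $Y^1_t\neq Y^2_t$, at least one of the two values exceeds $a_t$ strictly, so the midpoint lies in $\mathcal{D}_t^\circ=]a_t,+\infty[$ and your pair is legitimate there; whereas on $\{Y^1_t=Y^2_t\}$ the target measure $(Y^1_t-Y^2_t)(dK^1_t-dK^2_t)$ vanishes identically, so nothing needs to be proved. Hence one defines
$$
\alpha_t:=\frac{Y^1_t+Y^2_t}{2}\,\mathds{1}_{\{Y^1_t\neq Y^2_t\}}+(a_t+\varepsilon)\,\mathds{1}_{\{Y^1_t=Y^2_t\}},\qquad
\beta_t:=k(t,\alpha_t),
$$
with $\varepsilon>0$ arbitrary, which is $\operatorname{Gr}(k_t)$-valued for every $t$, and then runs your cancellation argument after multiplying by $\mathds{1}_{\{Y^1_t\neq Y^2_t\}}$. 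With this modification your proof coincides with the paper's.
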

\begin{proof}
	The proof of this lemma follows the same lines as that given by L{\'e}pingle and Marois \cite{lepingle2006equations} for multivalued stochastic differential equations in the Brownian setting, and later by Marois \cite{marois1990equations} for the discontinuous case. However, for the sake of completeness, we provide the full proof here.

	From \eqref{eq2}, we know that $Y^1_t$ and $Y^2_t$ take values in $[a_t; +\infty[$. The idea is to take, as a candidate for $\alpha_t$ of \eqref{eq4}, the process $(Y^1 + Y^2)/2$ when it is distinct from the boundary $a_t$ (which is the case if $Y^1$ and $Y^2$ are distinct), and to take $\beta_t=k(t,(Y^1_t + Y^2_t)/2)$. Otherwise, one may take, for instance, as a pair of optional processes belonging to $\operatorname{Gr}(k_t)$, the pair $(\widehat{\alpha}_t,\widehat{\beta}_t):=(a_t+\varepsilon, k(t,\widehat{\alpha}_t)$ where $\varepsilon$ is any strictly positive constant.\\
	Then, by considering the pair $(\alpha_t, \beta_t)_{t \in [0,T]}$ defined by:
	\begin{equation*}
		\left\lbrace 
		\begin{split}
			\alpha_t&:=\frac{Y^1_t+Y^2_t}{2}\mathds{1}_{\{Y^1_t \neq Y^2_t\}}+\widehat{\alpha}_t \mathds{1}_{\{Y^1_t = Y^2_t\}};\\
			\beta_t&:=k\left(t,\frac{Y^1_t+Y^2_t}{2}\right)\mathds{1}_{\{Y^1_t \neq Y^2_t\}}+\widehat{\beta}_t \mathds{1}_{\{Y^1_t = Y^2_t\}}.
		\end{split}
		\right. 
	\end{equation*}
From \eqref{eq4}, we derive the negativity of the following measures on $[0,T]$:
	 \begin{equation*}
	 	\left\lbrace 
	 	\begin{split}
	 		&\mathds{1}_{\{Y^1_t \neq Y^2_t\}}\left(Y^1_t-Y^2_t\right)\left(dK^1_t+k\left(t,\frac{Y^1_t+Y^2_t}{2}\right)dt \right);\\
	 		&\mathds{1}_{\{Y^1_t \neq Y^2_t\}}\left(Y^2_t-Y^1_t\right)\left(dK^2_t+k\left(t,\frac{Y^1_t+Y^2_t}{2}\right)dt \right).
	 	\end{split}
	 	\right. 
	 \end{equation*}
This, together with the following equality in the sense of measures:
\begin{equation*}
	\begin{split}
		&\left(Y^1_t - Y^2_t\right)\left(dK^1_t - dK^2_t\right) \\
		&= \mathds{1}_{\{Y^1_t \neq Y^2_t\}} \left(Y^1_t - Y^2_t\right) \left(dK^1_t + k\left(t, \frac{Y^1_t + Y^2_t}{2}\right) dt \right) \\
		&\quad + \mathds{1}_{\{Y^1_t \neq Y^2_t\}} \left(Y^2_t - Y^1_t\right) \left(dK^2_t + k\left(t, \frac{Y^1_t + Y^2_t}{2}\right) dt \right),
	\end{split}
\end{equation*}
completes the proof.
\end{proof}
As a consequence of Lemma \ref{lemma1}, we derive the following result.
\begin{corollary}\label{coro1}
	Let the quadruplets $(Y^1, Z^1, \psi^1, K^1)$ and $(Y^2, Z^2, \psi^2, K^2)$ be two solutions of the MBSDE$(\xi_1, f_1, k_1)$ and MBSDE$(\xi_2, f_2, k_2)$, respectively. If, for every $(t, x) \in [0,T] \times \mathbb{R}$, the following conditions hold:
	\begin{itemize}
		\item $a^1_t \leq a^2_t$; 
		\item $k_1(t,x) \geq k_2(t,x)$ on $]a^2_t; +\infty[$,
	\end{itemize}
	then the measure $\mathds{1}_{\{Y^1_t > Y^2_t\}}(dK^1_t - dK^2_t)$ is a.s. negative on $[0,T]$.
\end{corollary}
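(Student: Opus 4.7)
The plan is to mirror the strategy of Lemma \ref{lemma1} by probing both MBSDEs at the common midpoint $m_t := (Y^1_t + Y^2_t)/2$. The crucial geometric observation is that on the set $\{Y^1_t > Y^2_t\}$ one has $m_t > Y^2_t \geq a^2_t \geq a^1_t$, so $m_t$ lies in the interior of both domains $\mathcal{D}^1_t$ and $\mathcal{D}^2_t$; consequently $(m_t, k_i(t, m_t)) \in \operatorname{Gr}(k^i_t)$ for each $i \in \{1,2\}$. I would then extend each pair to a globally defined optional graph-valued process by completing it arbitrarily on $\{Y^1_t \leq Y^2_t\}$ (for instance with a deterministic selection $(a^i_t + 1, k_i(t, a^i_t + 1))$), exactly in the spirit of the proof of Lemma \ref{lemma1}.

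Plugging these test processes into condition \eqref{eq4} for each MBSDE yields, on $\{Y^1_t > Y^2_t\}$, the signed-measure inequalities
\begin{equation*}
(Y^1_t - m_t)\bigl(dK^1_t + k_1(t, m_t)\, dt\bigr) \leq 0 \quad \text{and} \quad (Y^2_t - m_t)\bigl(dK^2_t + k_2(t, m_t)\, dt\bigr) \leq 0.
\end{equation*}
Since $Y^1_t - m_t = (Y^1_t - Y^2_t)/2 > 0$ and $Y^2_t - m_t = -(Y^1_t - Y^2_t)/2 < 0$ on this event, dividing by the first (positive) factor preserves the inequality, while dividing by the second (negative) factor reverses it, delivering
\begin{equation*}
\mathds{1}_{\{Y^1_t > Y^2_t\}}\bigl(dK^1_t + k_1(t, m_t)\, dt\bigr) \leq 0 \quad \text{and} \quad \mathds{1}_{\{Y^1_t > Y^2_t\}}\bigl(dK^2_t + k_2(t, m_t)\, dt\bigr) \geq 0.
\end{equation*}
Subtracting the second from the first and invoking the hypothesis $k_1(t, m_t) \geq k_2(t, m_t)$ (available because $m_t > a^2_t$ on the event) immediately yields the claim.

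The main technical subtlety is the rigorous division of a non-positive signed measure by the strictly positive process $Y^1_t - Y^2_t$; this can be handled by truncating to the sub-level sets $\{Y^1_t - Y^2_t \geq 1/n\}$, on which the reciprocal is bounded by $n$, and passing to the limit $n \to \infty$ via continuity of the measure. I note finally that the assumption $a^1_t \leq a^2_t$ is used precisely to ensure that the midpoint falls in the interior of $\mathcal{D}^1_t$ on $\{Y^1_t > Y^2_t\}$, so that the test pair for MBSDE$(\xi_1, f_1, k_1)$ is admissible; without it, the construction would break down.
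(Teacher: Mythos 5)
Your proof is correct and follows essentially the same route as the paper: the same midpoint test pair $\bigl(m_t, k_i(t,m_t)\bigr)$, the same observation that $m_t \in \;]a^2_t,+\infty[\; \subset \;]a^1_t,+\infty[$ on $\{Y^1_t > Y^2_t\}$, and the same use of $k_1 \geq k_2$ at $m_t$; the paper merely organizes the algebra as a three-term decomposition of $(Y^1_t-Y^2_t)(dK^1_t-dK^2_t)$ and removes the positive factor $(Y^1_t-Y^2_t)$ at the very end rather than dividing each inequality first. Your explicit truncation argument justifying that division is a precision the paper leaves implicit.
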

\begin{proof}
	On the set $\{Y^1_t > Y^2_t\}$, we have $(Y^1_t + Y^2_t)/2 \in ]a^2_t; +\infty[ \subset ]a^1_t; +\infty[$. By applying the same argument as in the Lemma \ref{lemma1}, we obtain:
	\begin{equation*}
		\begin{split}
			&\mathds{1}_{\{Y^1_t > Y^2_t\}} (Y^1_t - Y^2_t)(dK^1_t - dK^2_t) \\
			&= \mathds{1}_{\{Y^1_t > Y^2_t\}} (Y^1_t - Y^2_t)\left(dK^1_t + k_1\left(t, \frac{Y^1_t + Y^2_t}{2}\right)dt\right) \\
			&\quad + \mathds{1}_{\{Y^1_t > Y^2_t\}} (Y^1_t - Y^2_t)\left(k_2\left(t, \frac{Y^1_t + Y^2_t}{2}\right)-k_1\left(t, \frac{Y^1_t + Y^2_t}{2}\right) \right)dt \\
			&\quad +\mathds{1}_{\{Y^1_t > Y^2_t\}} (Y^2_t-Y^1_t)\left(k_2\left(t, \frac{Y^1_t + Y^2_t}{2}\right)dt + dK^2_t\right),
		\end{split}
	\end{equation*}
	which is a negative measure, completing the proof of the claim.
\end{proof}

Using Lemma \ref{lemma1}, we can establish the uniqueness of the solution for MBSDEs of the form \eqref{eq1}--\eqref{eq4}.
\begin{theorem}\label{uniq}
	Under condition (A), the MBSDE$(\xi,f, k)$ admits at most one solution $(Y_t, Z_t, \psi_t, K_t)_{t \leq T}$.
\end{theorem}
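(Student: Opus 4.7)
The plan is to subtract the two candidate solutions and run an $L^2$-energy estimate on the squared difference, using Lemma \ref{lemma1} to absorb the multivalued contribution. Let $(Y^i, Z^i, \psi^i, K^i)$, $i=1,2$, be two solutions in the sense of Definition \ref{def}, and set $(\bar Y, \bar Z, \bar\psi, \bar K) := (Y^1 - Y^2,\, Z^1 - Z^2,\, \psi^1 - \psi^2,\, K^1 - K^2)$. Since both solutions share the terminal value $\xi$, we have $\bar Y_T = 0$, and $\bar Y$ satisfies the backward dynamics driven by $\Delta f_s := f(s, Y^1_s, Z^1_s, \psi^1_s) - f(s, Y^2_s, Z^2_s, \psi^2_s)$ together with the finite-variation term $d\bar K$.

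Next, I would apply Itô's formula to $|\bar Y|^2$ between $t$ and $T$. Because both $K^1$ and $K^2$ are continuous with finite variation (by \eqref{eq3}), they contribute nothing to the quadratic variation of $\bar Y$; only $\bar Z$ and the jumps produced by $\bar\psi$ do. The integrability in Definition \ref{def} ensures that the Brownian and compensated-Poisson stochastic integrals are true martingales, so taking expectations leads to an identity of the form
\begin{equation*}
\mathbb{E}|\bar Y_t|^2 + \mathbb{E}\!\int_t^T\! |\bar Z_s|^2\,ds + \mathbb{E}\!\int_t^T\! \|\bar\psi_s\|_\pi^2\,ds
= 2\,\mathbb{E}\!\int_t^T\! \bar Y_s\,\Delta f_s\,ds + 2\,\mathbb{E}\!\int_t^T\! \bar Y_s\,d\bar K_s.
\end{equation*}
The crucial step is then Lemma \ref{lemma1}, which yields $\bar Y_s\,d\bar K_s \le 0$ a.s.\ on $[0,T]$, so the monotone-operator term drops out without any further structural input. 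For the driver term, I would invoke the full Lipschitz bound $|\Delta f_s| \le C(|\bar Y_s| + |\bar Z_s| + \|\bar\psi_s\|_\pi)$, which follows from (A.2)-(iii) combined with the Lipschitz-in-$\psi$ estimate $\|\vartheta\|_\pi \|\bar\psi\|_\pi$ recorded in the remark after (A.2). Young's inequality with a small parameter lets one absorb the $|\bar Z|^2$ and $\|\bar\psi\|_\pi^2$ cross-terms into the left-hand side, leaving
\begin{equation*}
\mathbb{E}|\bar Y_t|^2 \le \mathfrak{C} \int_t^T \mathbb{E}|\bar Y_s|^2\,ds.
\end{equation*}
Backward Gronwall then forces $\bar Y \equiv 0$; feeding this back into the previous identity gives $\bar Z \equiv 0$ and $\bar\psi \equiv 0$ in the appropriate $L^2$ spaces, and the BSDE identity \eqref{eq1} finally forces $\bar K \equiv 0$.

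I do not anticipate any genuine obstacle: all the work peculiar to the time-indexed family of maximal monotone operators $k_t(\cdot)$ has been isolated in Lemma \ref{lemma1}, so the argument reduces to the classical energy estimate for BSDEs with jumps. The only point requiring care is that the dependence of $f$ on $\psi$ is controlled via a Lipschitz bound rather than a pointwise monotonicity identity, but since (A.2)-(iv) together with swapping $\psi$ and $\phi$ produces precisely such a Lipschitz bound (as noted in the remark), no Girsanov-type change of measure of the kind one would use for comparison is necessary here.
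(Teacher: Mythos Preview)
Your proposal is correct and follows essentially the same route as the paper: subtract the two solutions, apply It\^o's formula to the square of the difference, kill the $d\bar K$-term via Lemma~\ref{lemma1}, and use the Lipschitz bound on $f$. The only cosmetic difference is that the paper applies It\^o to $e^{\zeta t}|\bar Y_t|^2$ and chooses $\zeta$ large enough to absorb the $|\bar Y|^2$-term directly, whereas you work with $|\bar Y_t|^2$ and invoke Gronwall at the end; these are well-known equivalent variants of the same energy estimate.
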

\begin{proof}
	Suppose that there exist two solutions $(Y^1, Z^1, \psi^1, K^1)$ and $(Y^2, Z^2, \psi^2, K^2)$ to the given problem, and denote $\widehat{\mathcal{R}}_t  := \mathcal{R}^1_t - \mathcal{R}^2_t$ for $\mathcal{R} \in \{Y, Z, \psi\}$. By applying Itô's formula (see, e.g., \cite[Theorem 32, page 78]{bookProtter}) to $(t, x) \mapsto e^{\zeta t } x^2$ (where $\zeta$ will be chosen
	later) and to the process $\widehat{Y}$ yields
	\begin{equation}\label{eq11}
		\begin{split}
			&e^{\zeta t} |\widehat{Y}_t|^2+\zeta \int_{t}^{T}e^{\zeta s} |\widehat{Y}_s|^2 ds\\
			&=2\int_{t}^{T}e^{\zeta s} \widehat{Y}_s \left(f(s,Y^1_s, Z^1_s,\psi^1_s)-f(s,Y^2_s, Z^2_s,\psi^2_s)\right) ds-\int_{t}^{T}e^{\zeta s} |\widehat{Z}_s|^2 ds\\
			&-\int_{t}^{T}\int_{\mathcal{U}}e^{\zeta s}  |\widehat{\psi}_s(e)|^2 \pi(de)ds-2\int_{t}^{T}e^{\zeta s} \widehat{Y}_s \widehat{Z}_s dW_s+2\int_{t}^{T}e^{\zeta s} (Y^1_s-Y^2_s)(dK^1_s-dK^2_s)\\
			&-2\int_{t}^{T}\int_{\mathcal{U}}e^{\zeta s}  \left(|Y_{s-}+\widehat{\psi}_s(e)|^2-|\widehat{Y}_{s-}|^2\right) \tilde{N}(ds,de).
		\end{split}
	\end{equation}
	From the assumptions on $f$, we have
	\begin{equation}\label{used}
		\begin{split}
			2\widehat{Y}_s \left(f(s,Y^1_s, Z^1_s,\psi^1_s)-f(s,Y^2_s, Z^2_s,\psi^2_s)\right)& \leq 2C|\widehat{Y}_s|\left(|\widehat{Y}_s|+|\widehat{Z}_s|+\|\widehat{\psi}_s\|_\pi\right)\\
			&\leq 2C (1+2 C)|\widehat{Y}_s|^2+\frac{1}{2} \left(|\widehat{Z}_s|^2+\|\widehat{\psi}_s\|_\pi^2\right)
		\end{split}
	\end{equation}
Plugging \eqref{used} into \eqref{eq11}, along with the result of Lemma \ref{lemma1}, we obtain
\begin{equation*}
	\begin{split}
		&e^{\zeta t} |\widehat{Y}_t|^2 + (\zeta - 2C (1 + 2C)) \int_{t}^{T} e^{\zeta s} |\widehat{Y}_s|^2 \, ds + \frac{1}{2} \int_{t}^{T} e^{\zeta s} (|\widehat{Z}_s|^2 + \|\widehat{\psi}_s\|^2_\pi) \, ds \\
		&\leq -2 \int_{t}^{T} e^{\zeta s} \widehat{Y}_s \widehat{Z}_s \, dW_s
		-2 \int_{t}^{T} \int_{\mathcal{U}} e^{\zeta s} \left( |Y_{s-} + \widehat{\psi}_s(e)|^2 - |\widehat{Y}_{s-}|^2 \right) \tilde{N}(ds,de).
	\end{split}
\end{equation*}
The right-hand side of the inequality is clearly a martingale. By choosing $\zeta = 2C (1 + 2C)$ and taking the expectation on both sides, we obtain
$$
\mathbb{E}\left[e^{\zeta t} |\widehat{Y}_t|^2\right] + \mathbb{E}\left[\int_{t}^{T} e^{\zeta s} (|\widehat{Z}_s|^2 + \|\widehat{\psi}_s\|^2_\pi) \, ds\right] = 0.
$$
Thus, we conclude the uniqueness of the control processes $(Z, \psi)$ and that $Y^1_t = Y^2_t$ a.s.,  for each $t \in [0, T]$. Additionally, since $Y^1$ and $Y^2$ are RCLL processes, it follows that $Y^1 = Y^2$ on $[0,T]$ in the indistinguishable sense. Finally, the uniqueness of $K$ follows directly from the uniqueness of the processes $(Y, Z, \psi)$ and the forward formulation of  the BSDE \eqref{eq1}.
\end{proof}

\begin{remark}\label{importa rmq}
	It should be noted that, in the case where, for every $t$, the function $k(t, \cdot)$ is defined and Lipschitz continuous on $\mathbb{R} = \mathcal{D}_t$, with $\int_{0}^{T} (k(s,0))^2 \, ds < +\infty$, the solution of the MBSDE $(\xi, f, k)$ is given by
	$$
	Y_t = \xi + \int_t^T \left\{f(s, Y_s, Z_s, \psi_s) - k(s,Y_s)\right\} ds - \int_t^T Z_s\, dW_s - \int_t^T \int_{\mathcal{U}} \psi_s(e)\, \tilde{N}(ds,de).
	$$
	Indeed, in such a case, we have
	$$
	k_t(x) = k(t,x) \quad \text{and} \quad 
	\operatorname{Gr}(k_t) = \left\{ (x, y) \in \mathbb{R} \times \mathbb{R} \;:\; y = k(t,x) \right\}=\operatorname{Gr}(k(t,\cdot)),
	$$
	which corresponds to a single-valued operator. Then, using Lemma 2.4 in \cite{TangLi1994}, or Theorem 3.1.1 in \cite[page 49]{Delong2014}, or Theorem 55.1 in \cite{Pardoux1997}, we deduce that there exists a unique solution $(Y, Z, \psi)$ to the above BSDE associated with $(\xi,f(t,y,z,\psi)-k(t,y))$ such that
	$$
	\mathbb{E}\left[\sup_{0 \leq t \leq T} |Y_t|^2 + \int_{0}^{T} \left( |Z_s|^2 + \|\psi_s\|^2_\pi \right) ds \right] < +\infty.
	$$
	On the other hand, by setting $K_t = -\int_{0}^{t} k(s, Y_s)\, ds$ for $t \in [0,T]$, it follows from the fact that, for every pair of optional processes $(\alpha_t, \beta_t)$ with values in $\operatorname{Gr}(k_t)$ for $t \in [0,T]$, we have $\beta_t = k(t, \alpha_t)$. Hence, the measure
	$$
	(Y_t - \alpha_t)(dK_t + \beta_t dt) = (Y_t - \alpha_t)(k(t, \alpha_t) - k(t, Y_t))\, dt
	$$
	is clearly a.s. negative on $[0,T]$ due to the monotonicity of the functions $k(t,\cdot)$. Thus, $K$ satisfies conditions \eqref{eq3}--\eqref{eq4}. Therefore, from the uniqueness result stated in Theorem \ref{uniq}, we conclude that $(Y, Z, \psi, K)$ is the unique solution of the MBSDE $(\xi, f, k)$.
\end{remark}

Let us now state our comparison results concerning MBSDEs associated with equations \eqref{eq1}--\eqref{eq4}.
%Girsanov, new dynamic...........
\begin{theorem}\label{thm1}
	Assume that condition (A) holds.
	Let $(Y^1, Z^1, \psi^1, K^1)$ and $(Y^2, Z^2, \psi^2, K^2)$ be two solutions of the MBSDE$(\xi_1, f_1, k_1)$ and MBSDE$(\xi_2, f_2, k_2)$, respectively. If, for every $(t, x) \in [0,T] \times \mathbb{R}$, the following conditions hold:
\begin{itemize}
	\item $\xi_1 \leq \xi_2 $ a.s.
	
	\item $f_1(t,y,z,u) \leq f_2(t,y,z,u) $ a.s. for any $(t, y,z,\psi) \in [0,T] \times \mathbb{R} \times \mathbb{R} \times \mathbb{L}^2_\pi$
	
	\item $a^1_t \leq a^2_t$ and $k_1(t,x) \geq k_2(t,x)$ on $]a^2_t; +\infty[$,
\end{itemize}
Then a.s. for any $t \in [0,T]$, $Y^1_t \leq Y^2_t$.
\end{theorem}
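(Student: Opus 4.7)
The plan is to apply a weighted Meyer--It\^o expansion to the positive part of $\widehat{Y}:=Y^1-Y^2$ and combine it with a Royer-type change of probability that linearises the $\psi$-dependence of $f_2$. Set $\widehat{Z}:=Z^1-Z^2$, $\widehat{\psi}:=\psi^1-\psi^2$, $\widehat{K}:=K^1-K^2$; the assumption $\xi_1\leq\xi_2$ yields $(\widehat{Y}_T)^+=0$, and $\widehat{Y}$ satisfies a BSDE with driver $\Delta f_s:=f_1(s,Y^1_s,Z^1_s,\psi^1_s)-f_2(s,Y^2_s,Z^2_s,\psi^2_s)$ and extra differential $-d\widehat{K}_s$.

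Apply the It\^o formula to $g(y)=(y^+)^2$, which is convex and $C^1$, so its jump version applies with $g'(y)=2y^+$ and left-quadratic-variation weight $\mathds{1}_{\{y>0\}}$. Because the predictable compensator of the jump remainder $g(\widehat{Y}_{s-}+\widehat{\psi}_s(e))-g(\widehat{Y}_{s-})-2(\widehat{Y}_{s-})^+\widehat{\psi}_s(e)$ is non-negative by convexity, discarding it yields the schematic upper bound
\begin{equation*}
\bigl((\widehat{Y}_t)^+\bigr)^2 + \int_t^T\mathds{1}_{\{\widehat{Y}_s>0\}}|\widehat{Z}_s|^2\, ds \leq 2\int_t^T(\widehat{Y}_{s-})^+\Delta f_s\, ds + 2\int_t^T(\widehat{Y}_{s-})^+\, d\widehat{K}_s + M_T-M_t,
\end{equation*}
where $M$ collects the $dW$ and $\tilde N$ stochastic integrals as a $\mathbb{P}$-local martingale. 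Splitting $\Delta f$ telescopically, the assumption $f_1\leq f_2$ kills the first term, condition (A.2)-(iii) together with Young's inequality controls the $(y,z)$-increments by $\mathfrak{C}\bigl((\widehat{Y})^+\bigr)^2+\tfrac{1}{2}\mathds{1}_{\{\widehat{Y}>0\}}|\widehat{Z}|^2$, and (A.2)-(iv) dominates the residual $\psi$-increment by $\int_{\mathcal U}\widehat{\psi}(e)\kappa_s(e)\pi(de)$ with $\kappa:=\kappa^{Y^2,Z^2,\psi^1,\psi^2}$ satisfying $\kappa\geq-1$ and $|\kappa|\leq\vartheta\in\mathbb{L}^2_\pi$. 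Corollary \ref{coro1} then gives $2\int_t^T(\widehat{Y}_s)^+\, d\widehat{K}_s\leq 0$.

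To absorb the remaining $\kappa$-linear drift, introduce the Dol\'eans exponential $L_t:=\mathcal{E}\bigl(\int_0^\cdot\!\int_{\mathcal U}\kappa_s(e)\tilde N(ds,de)\bigr)_t$, which is a strictly positive uniformly integrable $\mathbb{P}$-martingale thanks to the bounds on $\kappa$, and pass to the equivalent probability $\mathbb{Q}:=L_T\cdot\mathbb{P}$. Under $\mathbb{Q}$, the process $\hat N(ds,de):=\tilde N(ds,de)-\kappa_s(e)\pi(de)\,ds$ is a compensated Poisson random measure while $W$ remains Brownian, and the term $2\int(\widehat{Y})^+\!\int\widehat{\psi}\kappa\,\pi(de)\,ds$ merges algebraically with $-2\int\!\int(\widehat{Y}_{s-})^+\widehat{\psi}_s(e)\tilde N$ into the $\mathbb{Q}$-local martingale $-2\int\!\int(\widehat{Y}_{s-})^+\widehat{\psi}_s(e)\hat N(ds,de)$.

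After inserting the exponential weight $e^{\zeta t}$ with $\zeta$ larger than the Young-inequality constant, standard localisation (justified by the integrability in Definition \ref{def}) and taking $\mathbb{E}^{\mathbb{Q}}[\cdot]$ yield $\mathbb{E}^{\mathbb{Q}}\!\bigl[e^{\zeta t}\bigl((\widehat{Y}_t)^+\bigr)^2\bigr]\leq 0$, hence $(\widehat{Y}_t)^+=0$ $\mathbb{Q}$- and therefore $\mathbb{P}$-a.s.; RCLL regularity upgrades this to $Y^1_t\leq Y^2_t$ simultaneously for all $t\in[0,T]$ almost surely. The main obstacle is the jump dimension: the symmetric $|\widehat{Y}|^2$ expansion of Theorem \ref{uniq} is too coarse to extract sign information, and (A.2)-(iv) alone is insufficient without the measure change; the combination of Meyer--It\^o for $(y^+)^2$, Corollary \ref{coro1}, and the Royer-type Girsanov shift is precisely what closes the estimate in the presence of totally inaccessible jumps.
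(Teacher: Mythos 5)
Your proof is correct and follows essentially the same route as the paper: a Meyer--It\^o expansion of a positive-part function of $\widehat{Y}$, the bound $\mathds{1}_{\{Y^1_s>Y^2_s\}}(dK^1_s-dK^2_s)\leq 0$ from Corollary \ref{coro1}, assumption (A.2)-(iv) combined with a Royer-type Girsanov change of measure to absorb the jump increment of the driver, and a Gronwall-type conclusion. The only (harmless) technical variations are that you work with $(y^+)^2$ rather than $y^+$, which lets you control the $z$-increment by Young's inequality against the quadratic-variation term instead of including a Brownian drift $g_s\,dW_s$ in the Dol\'eans exponential, and you close the estimate by choosing $\zeta$ large rather than by Gronwall's lemma.
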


\begin{proof}
	The proof follows closely the argument of Theorem 2.6 in \cite{royer2006backward} or . 
	We define
	\begin{equation*}
		\left\lbrace 
		\begin{split}
			h_s&:=\frac{f_1(s,Y^1_s,Z^1_s,\psi^1_s)-f_1(s,Y^2_s,Z^1_s,\psi^1_s)}{Y^1_s-Y^2_s} \mathds{1}_{\{Y^1_s \neq Y^2_s\}}\\
			g_s&:=\frac{f_1(s,Y^2_s,Z^1_s,\psi^1_s)-f_1(s,Y^2_s,Z^2_s,\psi^1_s)}{Z^1_s-Z^2_s} \mathds{1}_{\{Y^1_s \neq Y^2_s\}}\\
			f_s&:=f_1(s,Y^2_s,Z^2_s,\psi^2_s)-f_2(s,Y^2_s,Z^2_s,\psi^2_s)
		\end{split}
		\right. 
	\end{equation*}
	We then define the following new equivalent probability measure
	$$
	\dfrac{d \mathbb{Q}}{d \mathbb{P}}:=\mathscr{E}\left(\int_{0}^{T} g_s dW_s+\int_{0}^{T} \int_{\mathcal{U}} \kappa^{Y^2_s, Z^2_s, \psi^1_s, \psi^2_s}_s(e) \tilde{N}(ds,de) \right),
	$$
	where $\mathscr{E}(\mathcal{X})$ is the Doleans-Dade stochastic exponential of a given $\mathbb{F}$-semimartingale $\mathcal{X}$ (see \cite[Ch II. Section 8]{bookProtter}). Using \cite[Theorem 2.5.1 , page 28]{Delong2014}, we derive that
	$$
	{W}^\mathbb{Q}_t:=W_t-\int_{0}^{t} g_s ds ,\quad t \in [0,T]
	$$
	\text{ and } 
	$${\tilde{N}}^\mathbb{Q}((0,t] \times \Lambda):={{N}}((0,t] \times \Lambda)-\int_{0}^{t} \int_\mathcal{U}\left(1+\kappa^{Y^2_s, Z^2_s, \psi^1_s, \psi^2_s}_s(e)\right)\pi(de)ds,\quad t \in [0,T],~ \Lambda \in \mathscr{B}(\mathcal{U}),
	$$
	are respectively a $(\mathbb{F},\mathbb{Q})$-Brownian motion and a $(\mathbb{F},\mathbb{Q})$-compensated Poisson random measure.
	
	Let use set $\widehat{\xi}=\xi_1-\xi_2$ and  $\widehat{\mathfrak{R}}=\mathfrak{R}^1-\mathfrak{R}^2$ for $\mathfrak{R} \in \{Y,Z,\psi\}$. 
	Meyer-Ito Formula formula (\cite[Theorem 66., page 210]{bookProtter}) applied to the process $\widehat{Y}_t = Y^1_t - Y^2_t$ and to the convex function $x \mapsto x^+$ under the new probability measure $\mathbb{Q}$ along with the monotonicity of $f_1$ w.r.t. $\psi$ yields:
%	\begin{equation}
%	   \begin{split}
%	   	 (\widehat{Y}^+_t)^2&=(\widehat{\xi}^+)^2-2\int_{t}^{T} \mathds{1}_{\{\widehat{Y}_{s-}>0\}} \widehat{Y}_{s-}d\widehat{Y}_{s}-\int_{t}^{T} \mathds{1}_{\{\widehat{Y}_{s}>0\}} d\big[\widehat{Y}^c, \widehat{Y}^c\big]_s\\
%	   	 &-\sum_{t<s \leq T} \left\{(\widehat{Y}^+_s)^2-(\widehat{Y}^+_{s-})^2-2\mathds{1}_{\{\widehat{Y}_{s-}>0\}} \widehat{Y}_{s-}\Delta \widehat{Y}_{s} \right\}
%	   \end{split}
%	\end{equation}

	 \begin{equation*}
	 	 \begin{split}
	 	\widehat{Y}^+_t &\leq \widehat{\xi}^+ + \int_{t}^{T} \mathds{1}_{\{\widehat{Y}_{s-}>0\}}\left(f_s+h_s \widehat{Y}_s+g_s \widehat{Z}_s+\int_{\mathcal{U}}\widehat{\psi}_s(e) \kappa^{Y^2_s, Z^2_s, \psi^1_s, \psi^2_s}_s(e) \pi({d} e)\right)ds\\
	 	&\quad+ \int_{t}^{T} \mathds{1}_{\{{Y}^1_{s}>Y^2_s\}}\left(dK^1_s-dK^2_s\right)-\int_{t}^{T} \mathds{1}_{\{\widehat{Y}_{s-}>0\}} \widehat{Z}_s dW_s- \int_t^T\int_{\mathcal{U}} \widehat{\psi}_s(e)\, \tilde{N}(ds,de)\\
	 	&= \widehat{\xi}^+ + \int_{t}^{T} \mathds{1}_{\{\widehat{Y}_{s}>0\}}\left(f_s+h_s \widehat{Y}_s\right)ds+ \int_{t}^{T} \mathds{1}_{\{{Y}^1_{s}>Y^2_s\}}\left(dK^1_s-dK^2_s\right)- \int_{t}^{T}d\mathfrak{M}^\mathbb{Q}_s
	 		  \end{split}
	 \end{equation*}
 where $d\mathfrak{M}^\mathbb{Q}_s=\mathds{1}_{\{\widehat{Y}_{s-}>0\}} \widehat{Z}_s dW^\mathbb{Q}_s+\int_{\mathcal{U}} \widehat{\psi}_s(e)\, \tilde{N}^\mathbb{Q}(ds,de)$ is an $(\mathbb{F},\mathbb{Q})$-martingale starting from zero.\\
 Using Corollary \ref{coro1} and taking the expectation on both sides of the above inequality along with the Lipschitz property of $f$ and assumptions on the driver of Theorem \ref{thm1}, we get
 $$
 \mathbb{E}_\mathbb{Q}\left[\widehat{Y}^+_t\right] \leq C \int_{t}^{T} \mathbb{E}_\mathbb{Q}\left[\widehat{Y}^+_s\right] ds,\quad t \in [0,T]
 $$
 Finally, by applying Gronwall's lemma, we get the want result under $\mathbb{Q}$ and then under $\mathbb{P}$ by equivalence.
\end{proof}

\section{Existence result}
%\subsection{Existence in the case where the graphs of $k_t(\cdot)$ are contained in $\mathbb{R} \times \mathbb{R}_-$}
\label{sec2}
%In this part, we assume that the giving functions $k(t, \cdot)$ take negative values.
In this section, we aim to establish the existence result for the MBSDE$(\xi, f, k)$ using a penalization approximation under assumption (A), along with two additional assumptions concerning the local-in-time integrability of the functions $k(\cdot,x)$. More precisely, we introduce the following local assumptions (B):
\begin{itemize}
	\item[(B.1)] For any $y \in ]\sup_{t \in {I}} a_t; +\infty[$, $\int_{I}|k(s,y)|ds<+\infty$.
	
	\item[(B.2)] There exists some $z \in ]\sup_{t \in {I}}a_t; +\infty[$ such that $\int_I |k(s,z)|^2 ds<+\infty$.
\end{itemize}

The following is an obvious yet important remark concerning the local assumptions (B) stated above:
\begin{remark}\label{rmq1}
	Since the functions $k(t,\cdot)$ take values in $\mathbb{R}_-$, under assumption (B) we obtain, for any compact set ${I}$ of $[0,T]$, that:
	\begin{itemize}
		\item[\text{(b.1)}] Assumption (B.1) implies that for any $y \in ]\sup_{t \in {I}} a_t; +\infty[$, $\int_{I}|k(s,y)|ds<+\infty$.
		
		\item[\text{(b.2)}] Assumption (B.2) implies that there exists some $z \in ]\sup_{t \in {I}}a_t; +\infty[$ such that $\int_I |k(s,z)|^2 ds<+\infty$.
	\end{itemize}
	These properties follow from extracting a finite sub-cover from the covering of $I$ by intervals of the form $]u, v[$, and by exploiting the decreasing nature of the functions $|k(s,\cdot)|$ and $(k(s,\cdot))^2$.
\end{remark}

The main result of this first part is given as follows:
\begin{theorem}\label{main1}
	Under the assumptions (A) and (B), there exists a unique solution $(Y_t, Z_t, \psi_t, K_t)_{t \leq T}$ to the MBSDE$(\xi,f,k)$.
\end{theorem}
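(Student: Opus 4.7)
The plan is to establish existence by penalization, since uniqueness is already granted by Theorem \ref{uniq}. For each $n \geq 1$, I approximate the maximal monotone operator $k_t(\cdot)$ by a measurable, $n$-Lipschitz, $x$-nondecreasing map $k_n(t, \cdot) : \mathbb{R} \to \mathbb{R}_-$, for instance the Yosida regularization of index $1/n$ (after extending $k(t, \cdot)$ to $\mathbb{R}$ with the convention $-\infty$ on $(-\infty, a_t)$). Such an approximation satisfies: $k_n$ is nonincreasing in $n$, $k_n(t, x) \to k(t, x)$ on the interior $\mathcal{D}_t^\circ$, and $k_n(t, x) \to -\infty$ for $x < a_t$. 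Assumption (B.2) together with the monotonicity of $|k_n(t, \cdot)|$ gives $t \mapsto k_n(t, 0) \in L^2([0, T])$, hence the driver $f_n(t, y, z, \psi) := f(t, y, z, \psi) - k_n(t, y)$ is uniformly Lipschitz and satisfies the integrability hypothesis of Remark \ref{importa rmq}. This yields a unique triple $(Y^n, Z^n, \psi^n)$ solving the penalized BSDE
$$
Y^n_t = \xi + \int_t^T \bigl[f(s, Y^n_s, Z^n_s, \psi^n_s) - k_n(s, Y^n_s)\bigr]\,ds - \int_t^T Z^n_s\,dW_s - \int_t^T\!\!\int_\mathcal{U} \psi^n_s(e)\,\tilde{N}(ds, de),
$$
and I set $K^n_t := -\int_0^t k_n(s, Y^n_s)\,ds$, continuous and nondecreasing with $K^n_0 = 0$.

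Since $-k_n$ is nondecreasing in $n$, the standard comparison principle for BSDEs with jumps (the single-valued instance of Theorem \ref{thm1}) yields $Y^n_t \leq Y^{n+1}_t$ a.s. Applying Itô's formula to $e^{\zeta t}(Y^n_t - y_\ast)^2$, where $y_\ast$ is the reference point of (B.2) and $\zeta$ is sufficiently large, combined with the Lipschitz property of $f$ and the non-positivity of $k_n$, will deliver the uniform estimate
$$
\sup_n \mathbb{E}\left[\sup_{t \leq T}|Y^n_t|^2 + \int_0^T (|Z^n_s|^2 + \|\psi^n_s\|_\pi^2)\,ds + |K^n_T|^2\right] < +\infty.
$$
Assumption (B.2) is crucial here to bound $\int_0^T |k_n(s, y_\ast)|^2\,ds$ uniformly in $n$, which then permits closing the bound on $K^n_T$ in $L^2$ by isolating this quantity from the BSDE identity.

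Setting $Y_t := \lim_n Y^n_t$, square integrable by monotone convergence and Fatou, the strong convergence of $(Z^n, \psi^n)$ to some $(Z, \psi)$ in $L^2$ will follow from an Itô expansion of $|Y^n - Y^m|^2$: the cross term $(Y^n - Y^m)(k_n(\cdot, Y^n) - k_m(\cdot, Y^m))$ splits into pieces with favourable sign owing to the $x$-monotonicity of each $k_n$, plus remainders controlled uniformly using (B.1)--(B.2). A subsequence of $K^n$ then converges in $L^2$ to a continuous nondecreasing $K$ with $K_0 = 0$; passage to the limit in the penalized equation produces \eqref{eq1}, and the RCLL regularity of $Y$ is inherited from that of the stochastic integrals.

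The main obstacle is verifying the constraints \eqref{eq2} and \eqref{eq4}. For \eqref{eq2}: were $Y_t < a_t$ to hold on a set of positive $\mathbb{P} \otimes dt$-measure, the construction of $k_n$ forces $-k_n(s, Y^n_s) \to +\infty$ there, contradicting the uniform $L^2$ bound on $K^n_T$; hence $Y_t \in [a_t, +\infty[$. For \eqref{eq4}: given an optional pair $(\alpha, \beta)$ with $(\alpha_t, \beta_t) \in \operatorname{Gr}(k_t)$, the $x$-monotonicity of $k_n$ yields
$$
(Y^n_t - \alpha_t)\bigl(k_n(t, Y^n_t) - k_n(t, \alpha_t)\bigr) \geq 0 \quad \mathbb{P} \otimes dt\text{-a.e.},
$$
i.e.\ the measure $(Y^n_t - \alpha_t)(dK^n_t + k_n(t, \alpha_t)\,dt)$ is non-positive on $[0, T]$. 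Since $\beta_t \leq k(t, \alpha_t)$ on $\mathcal{D}_t^\circ$ and $k_n(t, \alpha_t) \to k(t, \alpha_t)$, assumption (B.1) provides the integrability needed to pass to the limit in the $dt$-integral, and the limit then gives the desired negativity of $(Y_t - \alpha_t)(dK_t + \beta_t\,dt)$. The genuinely delicate point is precisely this passage to the limit in the measure test, which mixes the only-subsequentially-convergent increasing process $K$ with the operator values $\beta$; assumptions (B.1)--(B.2) are tailored precisely to make it rigorous.
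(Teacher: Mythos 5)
Your overall strategy (penalization with decreasing Lipschitz approximations $k_n\le 0$, monotone limit of $Y^n$ via comparison, a priori estimates from It\^o's formula applied to $e^{\zeta t}(Y^n_t-z)^2$ with $z$ the reference point of (B.2), Cauchy property of $(Z^n,\psi^n)$, and the contradiction argument for \eqref{eq2}) is exactly the paper's route, and those steps are sound in outline. The genuine gap is in your verification of \eqref{eq4}. You pass from the penalized inequality $(Y^n_t-\alpha_t)(dK^n_t+k_n(t,\alpha_t)\,dt)\le 0$ to $(Y_t-\alpha_t)(dK_t+\beta_t\,dt)\le 0$ using only $\beta_t\le k(t,\alpha_t)$. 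The correction term is $(Y_t-\alpha_t)(\beta_t-k(t,\alpha_t))\,dt$, and since $\beta_t-k(t,\alpha_t)\le 0$, this term has the \emph{wrong} sign on the set $\{Y_t<\alpha_t\}$ (which is nonempty in general, e.g.\ when $Y$ sits near $a_t$ and $\alpha_t$ is an interior point). On that set one must instead compare $\beta_t$ with the left limit $k_-(t,\alpha_t)$, and the single limiting inequality involving $k(t,\alpha_t)$ does not deliver this. The paper resolves it by proving the stronger two-sided statement that on $\{Y_{t-},Y_t\in\mathcal D_t^\circ\}$ the measure $dK_t$ is absolutely continuous with density $\Phi_t$ satisfying $(Y_t,-\Phi_t)\in\operatorname{Gr}(k_t)$ (i.e.\ $-\Phi_t\in[k_-(t,Y_t),k(t,Y_t)]$), after which the maximal monotonicity inequality $(x-u)(y-v)\ge 0$ for graph pairs yields \eqref{eq4} for an \emph{arbitrary} $(\alpha_t,\beta_t)$; the boundary set $\{Y_t=a_t\}$ is then treated separately according to whether $a_t\in\mathcal D_t$. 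Your one-sided inequality is strictly weaker than this and cannot be upgraded to \eqref{eq4} without essentially redoing that analysis.

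A second, related defect: your dominated-convergence step for $\int(Y^n_t-\alpha_t)k_n(t,\alpha_t)\,dt$ is not justified by (B.1). Assumption (B.1) controls $\int_I|k(s,y)|\,ds$ only for a \emph{fixed} $y>\sup_{t\in I}a_t$, whereas $\alpha$ is an arbitrary optional selection of $\mathcal D_t$ and may accumulate at the moving boundary $a_t$, where $k(t,\cdot)$ blows up; thus $\int_0^T|k(t,\alpha_t)|\,dt$ need not be finite and the limit of the penalized measure inequality cannot be taken globally in $t$. The paper's proof never integrates $k(\cdot,\alpha_\cdot)$ against $dt$; it localizes to compact subintervals on which $Y_{t-},Y_t$ stay above a fixed level $\Lambda>a_t$ and uses the fixed-argument bounds of Remark \ref{rmq1} there. (For the same reason, in your Step on the Cauchy property of $(Z^n,\psi^n)$, the ``remainders controlled uniformly using (B.1)--(B.2)'' require the specific device of shifting the argument of $k_n$ up to the fixed point $z$ on $\{Y^1_s\le z\}$; this is where the real work lies and should be made explicit.)
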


Let us first state a lemma that will be used to construct the solution of our MBSDE $(\xi, g, k)$ in the case where the graphs of $k_t(\cdot)$ are contained in $\mathbb{R} \times \mathbb{R}_-$. This result is explicitly stated in \cite[page 122]{marois1990equations} and also appears in a similar form in \cite[page 523]{lepingle2006equations}.
\begin{lemma}\label{apprx lemma}
Since the functions $k(t, \cdot)$ take negative values, there exists a sequence of numerical functions $\{k_n(t,\cdot)\}_{n \geq 1}$ defined on $\mathbb{R}$ such that:
	\begin{itemize}
		\item[\text{(i)}] The functions $k_n(t, \cdot)$ take negative values, are increasing, and are Lipschitz continuous with Lipschitz constant $n$;
		
		\item[\text{(ii)}] The sequence $\{k_n(t,\cdot)\}_{n \geq 1}$ is decreasing;
		
		\item[\text{(iii)}] The sequence $\{k_n(t,\cdot)\}_{n \geq 1}$ is unbounded below for $x \in \mathcal{D}_t^c = \mathbb{R} \setminus \mathcal{D}_t$, and it converges to $k(t, x)$ when $x \in \mathcal{D}_t$, meaning:
		\begin{itemize}
			\item For $x \in \mathcal{D}_t$, we have $\lim\limits_{n \rightarrow +\infty} k_n(t,x)=k(t,x)$;
			
			\item For $x \notin \mathcal{D}_t$, we have $\lim\limits_{n \rightarrow +\infty} k_n(t,x)=-\infty$;
		\end{itemize}
		
		\item[\text{(iv)}] The functions $k_n(\cdot, x)$ are measurable on $[0,T]$.
	\end{itemize}
\end{lemma}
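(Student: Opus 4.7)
The plan is to construct the sequence explicitly by a one-sided Moreau--Yosida-type regularization, writing
\begin{equation*}
k_n(t,x) := \sup_{y \in \mathcal{D}_t}\bigl\{\, k(t,y) - n\,(y-x)^{+} \,\bigr\},
\end{equation*}
where $(u)^{+} := \max(u,0)$. The asymmetric penalty $n(y-x)^{+}$ charges only the values $y > x$; this asymmetry is precisely what preserves increase in $x$ and forces blow-up to $-\infty$ outside the domain, while the absence of any penalty for $y \leq x$ keeps the sup bounded above by $0$.

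I would then check the four items by short and essentially routine arguments. For (i), the $n$-Lipschitz character follows because $x \mapsto n(y-x)^{+}$ is $n$-Lipschitz uniformly in $y$, and a sup of $n$-Lipschitz functions is $n$-Lipschitz. Monotonicity in $x$ follows from $(y-x_1)^{+} \geq (y-x_2)^{+}$ whenever $x_1 \leq x_2$, inherited by the sup. Negativity is immediate since each summand is a sum of two nonpositive terms. For (ii), for $n_1 \leq n_2$ and any $y$, one has $-n_1(y-x)^{+} \geq -n_2(y-x)^{+}$, which passes to the sup.

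For (iii), when $x \in \mathcal{D}_t$, choosing $y = x$ in the sup gives $k_n(t,x) \geq k(t,x)$. For the upper bound I would split the sup into $y \leq x$ (contribution bounded by $k_-(t,x) \leq k(t,x)$, and by $k(t,a_t)$ when $x = a_t \in \mathcal{D}_t$) and $y > x$: given $\varepsilon > 0$, right-continuity of $k(t,\cdot)$ at $x$ yields $\delta > 0$ with $k(t,x+\delta) - k(t,x) < \varepsilon$; on $(x,x+\delta]$ the summand is $\leq k(t,x) + \varepsilon$, while on $(x+\delta,+\infty) \cap \mathcal{D}_t$ it is $\leq -n\delta$, which is $\leq k(t,x)$ once $n$ is large. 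Hence $k_n(t,x) \in [k(t,x), k(t,x)+\varepsilon]$ eventually, giving $k_n(t,x) \downarrow k(t,x)$. When $x \notin \mathcal{D}_t$, i.e.\ $x < a_t$, every $y \in \mathcal{D}_t$ satisfies $y - x \geq a_t - x > 0$, so $k_n(t,x) \leq -n(a_t - x) \to -\infty$.

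For (iv), the right-continuity of $k(t,\cdot)$ lets me replace $\sup_{y \in \mathcal{D}_t}$ by a sup over the countable set $(\mathbb{Q} \cap \mathcal{D}_t) \cup \{a_t \mathds{1}_{\{a_t \in \mathcal{D}_t\}}\}$; since $t \mapsto a_t$ is continuous and $t \mapsto k(t,y)$ is measurable for every fixed $y$, the restricted summand $\mathds{1}_{\{y \geq a_t\}}\bigl(k(t,y) - n(y-x)^{+}\bigr)$ is measurable in $t$, and $k_n(\cdot,x)$ becomes a countable sup of measurable functions. The only step that is not bookkeeping is the modulus-versus-penalty trade-off in (iii) when $k(t,\cdot)$ has left jumps accumulating to the right of $x$; this is the sole place where the right-continuity hypothesis is genuinely used.
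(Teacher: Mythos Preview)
Your construction is correct and the verifications of (i)--(iv) go through essentially as you outline, but your sequence is \emph{not} the one the paper uses. The paper takes $k_n(t,x)$ to be the ordinate of the intersection of $\operatorname{Gr}(k_t)$ with the line of slope $-n$ through $(x,0)$; this is exactly the Yosida approximation $k_n(t,x)=n\bigl(x-J_{1/n}(x)\bigr)$ with $J_{1/n}=(I+\tfrac{1}{n}k_t)^{-1}$, and the paper defers the routine checks to L\'epingle--Marois and Marois. Your one-sided sup-convolution $\sup_{y\in\mathcal{D}_t}\{k(t,y)-n(y-x)^+\}$ produces a genuinely different sequence (for instance, with $k(t,y)=y\wedge 0$ on $\mathbb{R}$, at $x=-2$, $n=1$, Yosida gives $-1$ while your formula gives $-2$), but both satisfy (i)--(iv). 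Your route is more elementary---everything reduces to monotonicity and Lipschitz bookkeeping on a sup---while the paper's geometric/resolvent picture plugs directly into the standard Yosida machinery and makes the Lipschitz constant $n$ and the monotone decrease immediate from known resolvent identities.

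One small gap: in (iii), when you treat $x\notin\mathcal{D}_t$ you write ``i.e.\ $x<a_t$'' and bound by $-n(a_t-x)$, but this misses the case $x=a_t\notin\mathcal{D}_t$ (where $a_t-x=0$). There you need instead that $\lim_{y\downarrow a_t}k(t,y)=-\infty$: given $M>0$ pick $\delta$ with $k(t,y)<-M$ on $(a_t,a_t+\delta)$, so the sup is $\le\max(-M,-n\delta)\le -M$ for $n$ large. Also, in the $y\le x$ split your bound should read $\le k(t,x)$ directly (the point $y=x$ is included), not $k_-(t,x)$; this is cosmetic since you already have the matching lower bound.
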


The explicit construction of such a sequence can be obtained by considering, for each $x$, the value $k_n(t, x)$ as the ordinate of the intersection point between $\mathrm{Gr}\, (k_t)$ and the line of slope $(-n)$ passing through the point $(x, 0)$. Moreover, the measurability of the functions $k_n(\cdot, x)$ on $[0,T]$ follows from the measurability of $k(\cdot, x)$ (see \cite{lepingle2006equations,marois1990equations} for more details).

\begin{proof}
The proof is carried out in six steps.
	\paragraph*{Step 1: Construction of the approximating MBSDEs.}
		\emph{}\\
Let $\{k_n(t,\cdot)\}_{n \in \mathbb{N}}$ be the sequence of approximation functions given by Lemma \ref{apprx lemma}. For each $t \in [0,T]$, let us consider the following BSDE with jumps:
\begin{equation}\label{approxequa}
	Y^n_t = \xi + \int_t^T \left\{f(s, Y^n_s, Z^n_s, \psi^n_s)-k_n(s,Y^n_s)\right\}\, ds - \int_t^T Z^n_s\, dW_s - \int_t^T\int_{\mathcal{U}} \psi^n_s(e)\, \tilde{N}(ds,de).
\end{equation}
We define the continuous adapted increasing process $K^n_t := -\int_{0}^{t} k_n(s,Y^n_s) \, ds$ for all $t \in [0,T]$. It follows that $K^n_0 = 0$ and $K^n_T - K^n_t = -\int_{t}^{T} k_n(s,Y^n_s) \, ds$.

Let us define the driver $f_n(\omega,t,y,z,\psi) := f(\omega, t,y,z,\psi) - k_n(t,y)$. From Lemma \ref{apprx lemma}-(i), we know that the functions $k_n(t, \cdot)$ are Lipschitz continuous with Lipschitz constant $n$. Combined with the Lipschitz property of $f$ with constant $C$, we deduce that $f_n$ is Lipschitz with respect to $(y,z,\psi)$ uniformly in $(\omega,t)$ with Lipschitz constant $n \vee C$. Additionally, from Remark \eqref{rmq1}-(b.2), we deduce that there exists some $z \in \, ]\sup_{t \in [0,T]} a_t, +\infty[$ such that $\int_0^T (k(s,z))^2 \, ds < +\infty$. Clearly, we have $z \in \bigcap_{t \in [0,T]} \mathcal{D}_t^\circ$, so we can dominate $(k_n(s,z))^2$ by $(k(s,z))^2$ for every $s \in [0,T]$, by virtue of Lemma \ref{apprx lemma}-(ii)-(iii). Thus, we obtain
\begin{equation*}
	\begin{split}
		\mathbb{E}\left[\int_{0}^{T} \left|f_n(t,0,0,0)\right|^2 dt\right] & \leq 2 \left(\mathbb{E}\left[\int_{0}^{T} \left|f(t,0,0,0)\right|^2 dt\right] + \int_{0}^{T} (k_n(t,0))^2 dt \right)\\
		& \leq  2 \left(\mathbb{E}\left[\int_{0}^{T} \left|f(t,0,0,0)\right|^2 dt\right] + 2n^2 z^2 T + 2\int_{0}^{T} (k_n(t,z))^2 dt \right)\\
		& \leq  2 \left(\mathbb{E}\left[\int_{0}^{T} \left|f(t,0,0,0)\right|^2 dt\right] + 2n^2 z^2 T + 2\int_{0}^{T} (k(t,z))^2 dt \right)\\
		& < +\infty.
	\end{split}
\end{equation*}
Using Theorem 3.1.1 in \cite[page 49]{Delong2014}, we conclude that, for each $n \geq 1$, there exists a unique solution $(Y^n_t, Z^n_t, \psi^n_t)_{t \leq T}$ of the BSDE \eqref{approxequa} associated with $(\xi, f_n)$ such that
$$
\mathbb{E}\left[\sup_{0 \leq t \leq T} |Y^n_t|^2 + \int_{0}^{T} \left( |Z^n_s|^2 + \|\psi^n_s\|^2_\pi \right) ds \right] < +\infty.
$$

	\paragraph*{Step 2: Uniform estimation of the sequence $\{Y^n, Z^n, \psi^n, K^n\}_{n \geq 1}$.}
		\emph{}\\
Let $z,\zeta \in \mathbb{R}$ be two arbitrary constants that will be chosen later. As in Theorem \ref{uniq}, by applying It\^o's formula to the function $(t,x) \mapsto e^{\zeta t} \left(x-z\right)^2$ with the process $Y^n$, we obtain
	\begin{equation}\label{eq5}
	\begin{split}
		&e^{\zeta t} (Y^n_t-z)^2+\zeta \int_{t}^{T}e^{\zeta s} (Y^n_s-z)^2 ds+\int_{t}^{T}e^{\zeta s} (|{Z}^n_s|^2+\|\psi^n_s\|^2_\pi) ds\\
		&=e^{\zeta T} (\xi-z)^2+2\int_{t}^{T}e^{\zeta s} ({Y}^n_s-z) \left(f(s,Y^n_s, Z^n_s,\psi^n_s)-k_n(s,Y^n_s)\right)ds-(\mathfrak{M}^n_T-\mathfrak{M}^n_t)
	\end{split}
\end{equation}
where $\mathfrak{M}^n$ is a martingale starting from zero.\\
Using the Lipschitz condition of $f$, the monotonicity of the function $k_n(t,\cdot)$, and choosing a constant $z$ as in Remark \ref{rmq1}-(b.2) applied to $[0, T]$, we obtain
	\begin{equation}\label{gen}
	\begin{split}
		&2 ({Y}^n_s-z) \left(f(s,Y^n_s, Z^n_s,\psi^n_s)-k_n(s,Y^n_s)\right)\\
		&=2({Y}^n_s-z) \left(f(s,Y^n_s, Z^n_s,\psi^n_s)-f(s,z, Z^n_s,\psi^n_s)\right)+2 ({Y}^n_s-z) \left(f(s,z, Z^n_s,\psi^n_s)-f(s,z, 0,0)\right)\\
		&+2 ({Y}^n_s-z) \left(f(s,z, 0,0)-f(s,0, 0,0)\right)+2 ({Y}^n_s-z) f(s,0, 0,0)\\
		&+ 2 ({Y}^n_s-z) \left(k_n(s,z)-k_n(s,Y^n_s)\right)-2({Y}^n_s-z)k_n(s,z)\\
		&\leq (2+2C+5C^2)({Y}^n_s-z)^2+\frac{1}{2}\left(|Z^n_s|^2+\|\psi^n\|^2_\pi\right)+z^2+|f(s,0,0,0)|^2+(k_n(s,z))^2\\
		&\leq (2+2C+5C^2)({Y}^n_s-z)^2+\frac{1}{2}\left(|Z^n_s|^2+\|\psi^n\|^2_\pi\right)+z^2+|f(s,0,0,0)|^2+(k(s,z))^2
	\end{split}
\end{equation}
Plugging this into \eqref{eq5}, choosing $\zeta > (2 + 2C + 5C^2)$, and taking the expectation on both sides, we obtain the existence of a constant $\mathfrak{C}_{C}$ such that
\begin{equation}\label{ui1}
	\begin{split}
		&\mathbb{E}\left[\int_{0}^{T} e^{\zeta s} (Y^n_s-z)^2 \, ds\right] + \mathbb{E}\left[ \int_{0}^{T} e^{\zeta s} (|{Z}^n_s|^2 + \|{\psi}^n_s\|^2_\pi) \, ds \right] \\
		&\leq\mathfrak{C}_{C} \left(\mathbb{E}\left[e^{\zeta T} |\xi|^2\right]+\mathbb{E}\left[\int_{0}^{T}e^{\zeta s}|f(s,0,0,0)|^2 ds\right]+z^2 T +\int_{0}^{T}e^{\zeta s}(k(s,z))^2ds\right).
	\end{split}
\end{equation}
Now, by rewriting It\^o's formula in an alternative form, we obtain
	\begin{equation*}
	\begin{split}
		e^{\zeta t} (Y^n_t-z)^2=&e^{\zeta T} (\xi-z)^2 -\zeta \int_{t}^{T}e^{\zeta s} (Y^n_s-z)^2 ds-\int_{t}^{T}e^{\zeta s} |{Z}^n_s|^2 ds\\
		&+2\int_{t}^{T}e^{\zeta s} ({Y}^n_s-z) \left(f(s,Y^n_s, Z^n_s,\psi^n_s)-k_n(s,Y^n_s)\right)ds\\
		&-\int_{t}^{T} \int_{\mathcal{U}} e^{\zeta s}|\psi^n_s(e)|^2 \pi(ds,de)-2\int_{t}^{T}e^{\zeta s} ({Y}^n_s-z) Z^n_s dW_s\\
		&-2\int_{t}^{T} \int_{\mathcal{U}} e^{\zeta s} ({Y}^n_{s-}-z) \psi^n_s(e) \tilde{N}(ds,de)
	\end{split}
\end{equation*}
which, together with \eqref{gen} and the same choice of the constant $z \in \bigcap_{t \in [0,T]} \mathcal{D}_t^\circ$ as above, gives
	\begin{equation}\label{eq6}
	\begin{split}
		e^{\zeta t} (Y^n_t-z)^2\leq&e^{\zeta T} (\xi-z)^2 +\frac{1}{2}\int_{t}^{T}e^{\zeta s} \|\psi^n_s\|^2_\pi ds+z^2(T-t)+\int_{t}^{T}e^{\zeta s} |f(s,0,0,0)|^2 ds\\
		&+\int_{t}^{T}e^{\zeta s} (k(s,z))^2 ds-2\int_{t}^{T}e^{\zeta s} ({Y}^n_s-z) Z^n_s dW_s\\
		&-2\int_{t}^{T} \int_{\mathcal{U}} e^{\zeta s} ({Y}^n_{s-}-z) \psi^n_s(e) \tilde{N}(ds,de).
	\end{split}
\end{equation}
Next, we apply the Burkholder-Davis-Gundy (BDG) inequality to obtain
\begin{equation}\label{eq7}
		\begin{split}
 &\mathbb{E}\left[\sup_{0 \leq t \leq T} \left|\int_{t}^{T}e^{\zeta s} ({Y}^n_s-z) Z^n_s dW_s\right| \right]\\
 &\leq c \mathbb{E}\left[\left(\int_{0}^{T}e^{2\zeta s} ({Y}^n_s-z)^2 |Z^n_s|^2 ds\right)^{\frac{1}{2}} \right]\\
 &\leq \frac{1}{8} \mathbb{E}\left[\sup_{0 \leq t \leq T}e^{\zeta t} ({Y}^n_t-z)^2\right]+2 c^2 \mathbb{E}\left[\int_{0}^{T}e^{\zeta s} |{Z}^n_s|^2 ds\right].
 	\end{split}
\end{equation}
By the same arguments, we have
\begin{equation}\label{eq8}
	\begin{split}
		&\mathbb{E}\left[\sup_{0 \leq t \leq T} \left|\int_{t}^{T} \int_{\mathcal{U}} e^{\zeta s} ({Y}^n_{s-}-z) \pi^n_s(e) \tilde{N}(ds,de)\right| \right]\\
		&\leq c \mathbb{E}\left[\left(\int_{0}^{T} \int_{\mathcal{U}} e^{2\zeta s} ({Y}^n_{s-}-z)^2 |\psi^n_s(e)|^2 N(ds,de)\right)^{\frac{1}{2}} \right]\\
		&\leq \frac{1}{8} \mathbb{E}\left[\sup_{0 \leq t \leq T}e^{\zeta t} ({Y}^n_t-z)^2\right]+2 c^2 \mathbb{E}\left[\int_{0}^{T}e^{\zeta s} \|\psi^n_s\|^2_\pi ds\right].
	\end{split}
\end{equation}
We shall now return to \eqref{eq6} and employ estimates \eqref{ui1}, \eqref{eq7}, and \eqref{eq8} to deduce that there exists a constant $\mathfrak{C}_{C}$ such that 
\begin{equation}\label{ui2}
	\begin{split}
		&\mathbb{E}\left[\sup_{0 \leq t \leq T} e^{\zeta t} (Y^n_t-z)^2 \right] \\
		&\leq\mathfrak{C}_{C} \left(\mathbb{E}\left[e^{\zeta T} |\xi|^2\right]+\mathbb{E}\left[\int_{0}^{T}e^{\zeta s}|f(s,0,0,0)|^2 ds\right]+z^2 T +\int_{0}^{T}e^{\zeta s}(k(s,z))^2ds\right).
	\end{split}
\end{equation}
As a particular consequence of the uniform estimates \eqref{ui1} and \eqref{ui2}, we deduce the existence of a constant $\mathfrak{C}_{C,T}$ such that
\begin{equation}\label{ui3}
	\begin{split}
			&\sup_{n \geq 1} \left\{\mathbb{E}\left[\sup_{0 \leq t \leq T} |Y^n_t|^2 \right]+\mathbb{E}\left[\int_{0}^{T}  |Y^n_s|^2 \, ds\right] + \mathbb{E}\left[ \int_{0}^{T} (|{Z}^n_s|^2 + \|{\psi}^n_s\|^2_\pi) \, ds \right] \right\}\\
			&\leq\mathfrak{C}_{C,T} \left(\mathbb{E}\left[ |\xi|^2\right]+\mathbb{E}\left[\int_{0}^{T}|f(s,0,0,0)|^2 ds\right]+z^2+\int_{0}^{T}(k(s,z))^2ds\right).
		\end{split}
\end{equation}
Finally, by rewriting the BSDE \eqref{approxequa} in forward form, we obtain
$$
K^n_t=Y^n_0-Y^n _t-\int_{0}^{t}f(s, Y^n_s, Z^n_s, \psi^n_s)ds+\int_{0}^{t} Z^n_s dW_s+\int_{0}^{t} \int_{\mathcal{U}}\psi^n_s(e)\tilde{N}(ds,de),\quad t \in [0,T].
$$
By squaring, taking expectations, and using the Lipschitz property of $f$, Hölder's inequality, and Itô's isometry, we obtain
\begin{equation*}
	\begin{split}
		&\mathbb{E}\left[|K^n_T|^2\right]\\
		& \leq \mathfrak{C}_{C,T}\left(\mathbb{E}\left[\sup_{0 \leq t \leq T}|Y^n_t|^2\right]+\mathbb{E}\left[ \int_{0}^{T} (|{Z}^n_s|^2 + \|{\psi}^n_s\|^2_\pi) \, ds \right]+\mathbb{E}\left[\int_{0}^{T}|f(s,0,0,0)|^2 ds\right]\right).
	\end{split}
\end{equation*}
This, together with \eqref{ui3}, allows us to obtain
\begin{equation}\label{ui4}
	\begin{split}
		\sup_{n \geq 1}\left\{\mathbb{E}\left[|K^n_T|^2\right]\right\}
		&\leq\mathfrak{C}_{C,T} \left(\mathbb{E}\left[ |\xi|^2\right]+\mathbb{E}\left[\int_{0}^{T}|f(s,0,0,0)|^2 ds\right]+z^2+\int_{0}^{T}(k(s,z))^2ds\right).
	\end{split}
\end{equation}

	\paragraph*{Step 3: Construction of the state variable $Y=(Y_t)_{t \leq T}$.}
		\emph{}\\
From Remark \ref{importa rmq}, we observe that the quadruplet $(Y^n, Z^n, \psi^n, K^n)$ solves the MBSDE$(\xi, f, k_n)$. Since $k_{n+1}(t,x) \leq k_n(t,x)$ on $]-\infty, +\infty[$ by Lemma \ref{apprx lemma}, it follows from the comparison Theorem \ref{thm1} that the sequence $\{Y^n\}_{n \geq 1}$ is increasing; that is, a.s., for all $t \in [0,T]$, we have $Y^{n+1}_t \geq Y^n_t$. Consequently, there exists a right lower semi-continuous process $Y=(Y_t)_{t \leq T}$ such that $\mathbb{P}$-a.s., for all $t \in [0,T]$, $Y_t=\lim\limits_{n \rightarrow +\infty} Y^n_t$.\\
Moreover, by applying Fatou's lemma to \eqref{ui3}, together with the Dominated Convergence Theorem, we obtain
\begin{equation}\label{cv1}
	\lim\limits_{n \rightarrow +\infty} \mathbb{E}\left[\int_{0}^{T}  |Y^n_s-Y_s|^2 \, ds\right]=0.
\end{equation}

	\paragraph*{Step 4: Construction of the control variables $(Z,\psi,K)$.}
		\emph{}\\
Now let $n \geq m \geq 1$, and consider the quadruples $(Y^n, Z^n, \psi^n, K^n)$ and $(Y^m, Z^m, \psi^m, K^m)$ to be the solutions of the MBSDE$(\xi, f, k_n)$ and MBSDE$(\xi, f, k_m)$, respectively. By applying It\^o's formula to the function $x \mapsto e^{\zeta t} x^2$ with the process $Y^n - Y^m$ and taking the expectation, we have
	\begin{equation}\label{eq9}
	\begin{split}
		& \mathbb{E}\left[|Y^n_t-Y^m_t|^2\right]+\mathbb{E}\left[ \int_{t}^{T} (|{Z}^n_s-Z^m_s|^2+\|{\psi}^n_s-{\psi}^m_s\|^2_\pi) ds\right] \\
		&=2\mathbb{E}\left[\int_{t}^{T}({Y}^n_s-{Y}^m_s) \left(f(s,Y^n_s, Z^n_s,\psi^n_s)-f(s,Y^m_s, Z^m_s,\psi^m_s)\right) ds\right] \\
		&+2\mathbb{E}\left[\int_{t}^{T}(Y^n_s-Y^m_s)(dK^n_s-dK^m_s)\right]. 
	\end{split}
\end{equation}
Using the monotonicity of the functions $k_n(t,\cdot)$ and the definition of $K^n$, along with the fact that $Y^n \geq Y^m$ for every $n \geq m$, we get
\begin{equation}\label{ret}
	\begin{split}
		&(Y^n_s-Y^m_s)(dK^n_s-dK^m_s)\\
		& \leq (Y^n_s-Y^m_s)dK^n_s\\
		&=(Y^m_s-Y^n_s)k_n(s,Y^n_s)ds\\
		&\leq (Y^m_s-Y^n_s)k_n(s,Y^1_s)ds\\
		&\leq (Y^m_s-Y^n_s)k_n(s,Y^1_s)\mathds{1}_{\{Y^1_s>z\}} ds+(Y^m_s-Y^n_s)k_n(s,Y^1_s)\mathds{1}_{\{Y^1_s \leq z\}} ds\\
		&\leq (Y^m_s-Y^n_s)k_n(s,z)\mathds{1}_{\{Y^1_s>z\}} ds+(Y^m_s-Y^n_s)k_n(s,Y^1_s)\mathds{1}_{\{Y^1_s \leq z\}} ds.
		\end{split}
\end{equation}
Additionally, for any arbitrary positive, adapted, and RCLL process $(\varepsilon^z_t)_{t \leq T}$, we have
$$(Y^m_s - Y^n_s)k_n(s, Y^1_s)\mathds{1}_{\{Y^1_s \leq z\}} ds \leq (Y^m_s - Y^n_s)k_n\big( s, Y^1_s + \big( Y^1_s - \sup_{t \in [0,T]} a_t \big)^- + \varepsilon^z_s \big) \mathds{1}_{\{Y^1_s \leq z\}} ds,$$
where $x^- = - \min(x, 0)$ and $z \in ]\sup_{t \in {[0,T]}} a_t; +\infty[$ is the same element chosen above to obtain the estimates \eqref{ui3} and \eqref{ui4}. Note that here, since $Y^1_s + \big( Y^1_s - \sup_{t \in [0,T]} a_t \big)^- + \varepsilon^z_s \in ]\sup_{t \in {[0,T]}} a_t; +\infty[$, and using Remark \ref{rmq1}-(b.1), we deduce that $\int_{0}^{T} |k_n\big( s, Y^1_s + \big( Y^1_s - \sup_{t \in [0,T]} a_t \big)^- + \varepsilon^z_s \big)| ds \leq \int_{0}^{T} |k\big( s, Y^1_s + \big( Y^1_s - \sup_{t \in [0,T]} a_t \big)^- + \varepsilon^z_s \big)| ds < +\infty$ a.s. Additionally, we have $\lim\limits_{n \rightarrow +\infty} k_n\big( s, Y^1_s + \big( Y^1_s - \sup_{t \in [0,T]} a_t \big)^- + \varepsilon^z_s \big) = k\big( s, Y^1_s + \big( Y^1_s - \sup_{t \in [0,T]} a_t \big)^- + \varepsilon^z_s \big)$ from Lemma \ref{apprx lemma}-(iii). Thus, the above inequality is well-defined for any $n \geq 1$.

From Hölder's inequality, we have
\begin{equation}\label{eq10}
	\begin{split}
		&\mathbb{E}\left[\int_{0}^{T}(Y^m_s-Y^n_s)k_n(s,z)\mathds{1}_{\{Y^1_s>z\}} ds\right]\\
		& \leq \left(\mathbb{E}\left[\int_{0}^{T} |Y^m_s-Y^n_s|^2 ds\right]\right)^{\frac{1}{2}} \left(\int_{0}^{T}(k_n(s,z))^2 ds\right)^{\frac{1}{2}}\\
		& \leq \left(\mathbb{E}\left[\int_{0}^{T} |Y^m_s-Y^n_s|^2 ds\right]\right)^{\frac{1}{2}} \left(\int_{0}^{T}(k(s,z))^2 ds\right)^{\frac{1}{2}}.
	\end{split}
\end{equation}
For the remaining term, by using the same arguments, we obtain
\begin{equation*}
	\begin{split}
		&\mathbb{E}\left[\int_{0}^{T}(Y^m_s-Y^n_s)k_n(s,Y^1_s)\mathds{1}_{\{Y^1_s \leq z\}} ds\right]\\
		&\leq \mathbb{E}\left[\int_{0}^{T}(Y^m_s-Y^n_s)k_n\big( s,Y^1_s+\big( Y^1_s-\sup_{t \in [0,T]} a_t\big)^- +\varepsilon^z_s \big) \mathds{1}_{\{Y^1_s \leq z\}} ds \right]\\
		& \leq \left(\mathbb{E}\left[\int_{0}^{T} |Y^m_s-Y^n_s|^2 ds\right]\right)^{\frac{1}{2}} \left(\mathbb{E}\left[\int_{0}^{T}\big(k_n\big( s,Y^1_s+\big( Y^1_s-\sup_{t \in [0,T]} a_t\big)^- +\varepsilon^z_s \big)\big)^2 \mathds{1}_{\{Y^1_s \leq z\}} ds\right] \right)^{\frac{1}{2}}
	\end{split}
\end{equation*}
Then, on the set $\{Y^1_s \leq z\}$: on the subset $\{Y^1_s > \sup_{t \in [0,T]} a_t\}$, we choose $\varepsilon^z$ such that $Y^1_s + \varepsilon^z_s = z$; and on the subset $\{Y^1_s \leq \sup_{t \in [0,T]} a_t\}$, we choose $\varepsilon^z$ such that $\sup_{t \in [0,T]} a_t + \varepsilon^z_s = z$. Thus, on the set $\{Y^1_s \leq z\}$, by defining $
\varepsilon^z_s = (z - Y^1_s)\mathds{1}_{\{Y^1_s > \sup_{t \in [0,T]} a_t\}} + (z - \sup_{t \in [0,T]} a_t)\mathds{1}_{\{Y^1_s \leq \sup_{t \in [0,T]} a_t\}}$, we get
\begin{equation}\label{eq12}
	\begin{split}
		&\mathbb{E}\left[\int_{t}^{T}(Y^m_s-Y^n_s)k_n(s,Y^1_s)\mathds{1}_{\{Y^1_s \leq z\}} ds\right]\\
		& \leq \left(\mathbb{E}\left[\int_{0}^{T} |Y^m_s-Y^n_s|^2 ds\right]\right)^{\frac{1}{2}} \left(\int_{0}^{T}\big(k_n\big( s,z)\big)^2 ds\right)^{\frac{1}{2}}\\
			& \leq \left(\mathbb{E}\left[\int_{0}^{T} |Y^m_s-Y^n_s|^2 ds\right]\right)^{\frac{1}{2}} \left(\int_{0}^{T}\big(k\big( s,z)\big)^2 ds\right)^{\frac{1}{2}}.
	\end{split}
\end{equation}
Returning to \eqref{ret}, and using \eqref{eq10} and \eqref{eq12}, we obtain
\begin{equation*}
	\mathbb{E}\left[\int_{t}^{T}(Y^n_s-Y^m_s)(dK^n_s-dK^m_s)\right]  \leq 2 \left(\mathbb{E}\left[\int_{0}^{T} |Y^m_s-Y^n_s|^2 ds\right]\right)^{\frac{1}{2}} \left(\int_{0}^{T}\big(k\big( s,z)\big)^2 ds\right)^{\frac{1}{2}}.
\end{equation*}
This, together with the convergence result \eqref{cv1}, yields
\begin{equation}\label{cv2}
\lim\limits_{n,m \rightarrow+\infty} \sup_{0 \leq t \leq T} \mathbb{E}\left[\int_{t}^{T}(Y^n_s-Y^m_s)(dK^n_s-dK^m_s)\right]=0.
\end{equation}
Returning to \eqref{eq9} and applying the same estimates as those used in \eqref{used} for the driver $f$, we obtain
\begin{equation}\label{eq13}
	\begin{split}
		&\mathbb{E}\left[ \int_{0}^{T} (|{Z}^n_s-Z^m_s|^2 + \|{\psi}^n_s-{\psi}^m_s\|^2_\pi) \, ds \right] \\
		&\leq 2C (1+2 C)\mathbb{E}\left[\int_{0}^{T} |Y^m_s-Y^n_s|^2 ds\right] +2\mathbb{E}\left[\int_{t}^{T}(Y^n_s-Y^m_s)(dK^n_s-dK^m_s)\right].
	\end{split}
\end{equation}
Applying \eqref{cv1} and \eqref{cv2} to \eqref{eq13}, we deduce that the sequence $\{Z^n, \psi^n\}_{n \geq 1}$ is Cauchy in $\mathbb{L}^2\left(\Omega \times [0,T], d\mathbb{P} \otimes dt\right) \times \mathbb{L}^2\left(\Omega \times [0,T] \times \mathcal{U}, d\mathbb{P} \otimes dt \otimes \pi(de)\right)$. Thus, there exists a pair of processes $(Z,\psi) \in \mathcal{P} \times \left(\mathcal{P} \otimes \mathscr{B}(\mathcal{U})\right)$ such that
$$
\mathbb{E}\left[\int_{0}^{T} \left(|Z_s|^2+\|\psi_s\|^2_\pi\right)ds \right]<+\infty
$$
and
\begin{equation}\label{cv3}
	\lim\limits_{n \rightarrow +\infty}\mathbb{E}\left[\int_{0}^{T} \left(|Z^n_s-Z_s|^2+\|\psi^n_s-\psi_s\|^2_\pi\right)ds \right]=0.
\end{equation}
Now, by applying It\^o's formula, we obtain
\begin{equation*}
	\begin{split}
	 |Y^n_t-Y^m_t|^2=&-\int_{t}^{T} |{Z}^n_s-Z^m_s|^2 ds+2\int_{t}^{T}(Y^n_s-Y^m_s)(dK^n_s-dK^m_s)\\
		&+2\int_{t}^{T} ({Y}^n_s-Y^m_s) \left(f(s,Y^n_s, Z^n_s,\psi^n_s)-f(s,Y^m_s, Z^m_s,\psi^m_s)\right)ds\\
		&-\int_{t}^{T} \int_{\mathcal{U}} |\psi^n_s(e)-\psi^m_s(e)|^2 \pi(ds,de)-2\int_{t}^{T} ({Y}^n_s-Y^m_s) (Z^n_s-Z^m_s) dW_s\\
		&-2\int_{t}^{T} \int_{\mathcal{U}} ({Y}^n_{s-}-Y^m_{s-}) (\psi^n_s(e)-\psi^m_s(e)) \tilde{N}(ds,de).
	\end{split}
\end{equation*}
Taking the supremum on both sides and applying the BDG inequality as in \eqref{eq7} and \eqref{eq8}, we deduce the existence of a constant $\mathfrak{C}_{C,T}$ such that
\begin{equation*}
	\begin{split}
	&\mathbb{E}\left[\sup_{0 \leq t \leq T}|Y^n_t-Y^m_t|^2\right] \\
	&\leq \mathfrak{C}_{C,T}\left(\mathbb{E}\left[ \int_{0}^{T} (|{Y}^n_s-Y^m_s|^2+|{Z}^n_s-Z^m_s|^2 + \|{\psi}^n_s-{\psi}^m_s\|^2_\pi) \, ds \right] \right.\\
	&\left.\qquad+\left(\mathbb{E}\left[\int_{0}^{T} |Y^m_s-Y^n_s|^2 ds\right]\right)^{\frac{1}{2}} \left(\int_{0}^{T}\big(k\big( s,z)\big)^2 ds\right)^{\frac{1}{2}}\right).
	\end{split}
\end{equation*}
Using \eqref{cv1}, \eqref{cv2}, and \eqref{cv3}, it follows 
\begin{equation*}
\lim\limits_{n,m\rightarrow +\infty}\mathbb{E}\left[\sup_{0 \leq t \leq T}|Y^n_t-Y^m_t|^2\right]=0
\end{equation*}
and then
\begin{equation}\label{cv4}
	\lim\limits_{n\rightarrow +\infty}\mathbb{E}\left[\sup_{0 \leq t \leq T}|Y^n_t-Y_t|^2\right]=0.
\end{equation}
From the convergence result \eqref{cv4}, we can extract a subsequence $\{Y^{\varphi(n)}\}_{n \geq 1}$ of $\{Y^n\}_{n \geq 1}$ that converges uniformly to $Y$. Then, using Fatou's lemma and \eqref{ui3}, we infer
\begin{equation}\label{eq14}
	\begin{split}
		&\mathbb{E}\left[\sup_{0 \leq t \leq T}|Y_t|^2\right] \\
		&\leq \liminf_{n \rightarrow +\infty}\mathbb{E}\left[\sup_{0 \leq t \leq T}|Y^{\varphi(n)}_t|^2\right]\\
		&\leq\mathfrak{C}_{C,T} \left(\mathbb{E}\left[ |\xi|^2\right]+\mathbb{E}\left[\int_{0}^{T}|f(s,0,0,0)|^2 ds\right]+z^2+\int_{0}^{T}(k(s,z))^2ds\right).
	\end{split}
\end{equation}
Now, we set, for every $t \in [0, T]$,
$$
K_T-K_t=Y_t-\xi-\int_t^T f(s, Y_s, Z_s, \psi_s)\, ds + \int_t^T Z_s\, dW_s +\int_t^T\int_{\mathcal{U}} \psi_s(e)\, \tilde{N}(ds,de)
$$
Moreover,
$$
K^n_T-K^n_t=Y^n_t-\xi - \int_t^T f(s, Y^n_s, Z^n_s, \psi^n_s) ds + \int_t^T Z^n_s\, dW_s+ \int_t^T\int_{\mathcal{U}} \psi^n_s(e)\, \tilde{N}(ds,de)
$$
By applying Doob's inequality and using the Lipschitz property of $f$, together with the convergence results \eqref{cv3} and \eqref{cv4}, we obtain
\begin{equation}\label{cv5}
	\lim\limits_{n\rightarrow +\infty}\mathbb{E}\left[\sup_{0 \leq t \leq T}|K^n_t-K_t|^2\right]=0.
\end{equation}
Therefore, from \eqref{ui4} and \eqref{cv5}, we have $\mathbb{E}\left[|K_T|^2\right] < +\infty$ with $K_0 = 0$, and there exists a subsequence $\phi(n)$ such that, for every $t \in [0, T]$, we have
\begin{equation}\label{K}
K_t=\lim\limits_{n \rightarrow +\infty}\int_{0}^{t} \left( -k_{\phi(n)}\left(s, Y^{(\phi(n))}_s\right)\right) ds,\quad \mathbb{P}\text{-a.s.}
\end{equation}

Consequently, the constructed quadruplet $(Y, Z, \psi, K)$ satisfies the following BSDE:
\begin{equation}\label{eq15}
		Y_t = \xi + \int_t^T f(s, Y_s, Z_s, \psi_s)\, ds - \int_t^T Z_s\, dW_s - \int_t^T\int_{\mathcal{U}} \psi_s(e)\, \tilde{N}(ds,de) + (K_T - K_t),\quad t \in [0,T]
\end{equation}
which corresponds to \eqref{eq1}. Additionally, the property \eqref{eq3} is satisfied by the process $K$, and by construction, we clearly have
$$
\mathbb{E}\left[\sup_{0 \leq t \leq T} |Y_t|^2 + \int_{0}^{T} \left( |Z_s|^2 + \|\psi_s\|^2_\pi \right) ds + | K_T|^2 \right] < +\infty.
$$
	\paragraph*{Step 5: $K$ is continuous and $Y$ verifies \eqref{eq2}.}
	\emph{}\\
$\bullet$ The sequence $\{Y^n\}_{n \geq 1}$ is increasing and converges to the process $Y$, which satisfies the BSDE \eqref{eq15}. Then, using Lemma 2.2 in \cite{peng1999monotonic}, we deduce that the limiting processes $Y$ and $K$ have RCLL paths on $[0,T]$. Now, from \eqref{eq15}, we have for all $t \in [0,T]$
$$
(K_t-K_{t-})=-(Y_t-Y_{t-})+\int_{\mathcal{U}} \psi_t(e)N(\{t\},de).
$$
On the other hand, from \eqref{approxequa}, we have
$$
Y^{\phi(n)}_t-Y^{\phi(n)}_{t-}=\int_{\mathcal{U}} \psi^{\phi(n)}_t(e)N(\{t\},de).
$$
From the bound of $Y^{\phi(n)}_{t}$ by $Y_{t}$, we deduce:
$$
Y_t \geq Y^{\phi(n)}_{t-}+\int_{\mathcal{U}} \psi^{\phi(n)}_t(e)N(\{t\},de).
$$
Using the convergence results \eqref{cv3} and \eqref{cv4}, and passing to the limit as $n \rightarrow +\infty$ (at least in the \textsc{UCP} sense; see, e.g., \cite[Definition on page 57]{bookProtter}), we get
$$
Y_t \geq Y_{t-}+\int_{\mathcal{U}} \psi_t(e)N(\{t\},de).
$$
Consequently, $(K_t - K_{t-})$ is negative, and since the process $K_t$ is increasing, we obtain the left-continuity of $K_t$, and hence the continuity of $K$. Thus, the constructed process $K$ satisfies \eqref{eq3}.\\

$\bullet$ Now, we can show that, for every $t \in [0,T]$, $Y_t$ takes its values in $[a_t; +\infty[ \cap \mathbb{R}$. It is therefore necessary to establish this property in the case where $a_t > -\infty$. We proceed by contradiction. If $Y_t$ is strictly less than $a_t$, then using the right-continuity of the process $Y$, there exists a real number $\varepsilon > 0$ and an interval $[t, s]$ over which $Y$ takes its values in $]-\infty, a_t - \varepsilon[$. Since the function $[0,T] \ni t \mapsto a_t$ is continuous at $t$, it is bounded from below by $(a_t - \frac{\varepsilon}{2})$ on a neighborhood of $t$. In particular, we have $a_u > a_t - \frac{\varepsilon}{2}$ for any $u \in [t,s]$. Consequently, for every $u$ in a right-hand neighborhood of $t$, we have $Y^{\phi(n)}_u \leq Y_u < a_t - \varepsilon < a_u - \frac{\varepsilon}{2}$ for all $u \in [t,s]$, and the sequence $\{k_{\phi(n)}(u, Y^{\phi(n)}_u)\}_{n \geq 1}$ is then bounded above by the sequence $\{k_{\phi(n)}(u, a_u - \frac{\varepsilon}{2})\}_{n \geq 1}$ using the fact that the functions $k_{\phi(n)}(u,\cdot)$ are increasing. Moreover, from Lemma \ref{apprx lemma}, the sequence $\{k_{\phi(n)}(u, a_u - \frac{\varepsilon}{2})\}_{n \geq 1}$ is decreasing, negative, and unbounded from below. Using \eqref{K} and the monotone convergence theorem, we get
$$
K_s - K_t = \lim\limits_{n \rightarrow +\infty} \int_{t}^{s} \left( -k_{\phi(n)}\left(u, Y^{\phi(n)}_u\right)\right)  du 
\geq 
\lim\limits_{n \rightarrow +\infty} \int_{t}^{s} \left( -k_{\phi(n)}\left(u, a_u - \frac{\varepsilon}{2}\right) \right) du = +\infty,
$$
and we thus arrive at a contradiction. As a result, the property \eqref{eq2} is verified.

\paragraph*{Step 6: Negativity of the measure $(Y_t-\alpha_t)(dK_t+\beta_t dt)$ on $[0,T]$.}
	\emph{}\\
From the upper bound of $k_{\phi(n)}(t, Y^{(\phi(n))}_t)$ by $k_{\phi(n)}(t, Y_t)$, we deduce, by integrating over the interval $[s, s'] \subset [0,T]$ and then passing to the limit (using \eqref{K} and applying the monotone convergence theorem to the sequence $\{k_{\phi(n)}(t, Y_t)\}_{n \geq 1}$), the inequality
$$
K_{s'}-K_s \geq \int_{s}^{s'}(-\lim\limits_{n \rightarrow +\infty} k_{\phi(n)}(t, Y_t)) \mathds{1}_{\{Y_t \notin \mathcal{D}_t\}}dt-\int_{s}^{s'} k(t, Y_t) \mathds{1}_{\{Y_t \in \mathcal{D}_t\}}dt.
$$
Equivalently, we can write
$$
\int_{s}^{s'}\mathds{1}_{\{Y_t \in \mathcal{D}_t\}}\left(dK_t+k(t,Y_t)dt\right)+\int_{s}^{s'}\mathds{1}_{\{Y_t \notin \mathcal{D}_t\}}\left(dK_t+\lim\limits_{n \rightarrow +\infty} k_{\phi(n)}(t, Y_t)dt\right) \geq 0.
$$
Using Lemma \ref{apprx lemma}-(iii), we know that the sequence $\{k_{\phi(n)}(t, Y_t)\}_{n \geq 1}$ is decreasing and unbounded from below for all $Y_t$ not belonging to $\mathcal{D}_t$. Thus, we get
\begin{equation}\label{m1}
	\mathds{1}_{\{Y_t \in \mathcal{D}_t\}}\left(dK_t+k(t,Y_t)dt\right) \geq 0
\end{equation}
and
\begin{equation}\label{m2}
	\mathds{1}_{\{Y_t \notin \mathcal{D}_t\}}dt=0.
\end{equation}
On the other hand, using \cite[Ch. 3 Theorem 32]{bookHe}, we deduce that the set $\{\Delta Y \neq 0\}$ is exhausted by a sequence of strictly positive stopping times. In particular, the set $\{t \in [0,T] : Y_{t-} \neq Y_t \}$ is countable. Then, using the continuity of the process $K$, we deduce that $\mathds{1}_{\{Y_t \in \mathcal{D}^\circ_t\}}(dK_t+k_{-}(t,Y_t)dt)$ is equal in measure to $\mathds{1}_{\{Y_t \in \mathcal{D}^\circ_t, Y_{t-} \in \mathcal{D}^\circ_t \}}(dK_t+k_{-}(t,Y_t)dt)$. Now, if $Y_t$ and $Y_{t-}$ belong to $\mathcal{D}^\circ_t=]a_t, +\infty[$, we can find an element $\Lambda$ in $]a_t, Y_{t-} \wedge Y_t[$. As we know that $Y^n \nearrow Y$ as  $n \rightarrow +\infty$, we may also find an interval centered around $t$, on which, from a certain rank onward, the sequence $\{Y^{(\phi(n))}\}_{ n \geq 1}$ is larger than $\Lambda$. Let $[s,s']$ be any compact subinterval of $\{t \in [0,T] : Y_{t-}  \in \mathcal{D}^\circ_t \text{ and } Y_{t}  \in \mathcal{D}^\circ_t  \}$. As $\int_{s}^{s'} |k_{-}(u,Y^{(\phi(n))}_u)| du \leq \int_{s}^{s'} |k_{-}(u,\Lambda)| du\leq \int_{s}^{s'} |k(u,\Lambda)| du$ and using Remark \ref{rmq1}-(b.1) on $[s,s']$, we get, after applying the dominated convergence theorem and using the left-continuity of the function $x \mapsto k_-(t,x)$, the limit
$$
\lim\limits_{n \rightarrow +\infty} \int_{s}^{s'} k_{-}(u,Y^{(\phi(n))}_u) du=\int_{s}^{s'} k_{-}(u,Y_u) du.
$$
From this and the inequality $k_{\phi(n)}(t,Y^{(\phi(n))}) \geq k_{-}(t,Y^{(\phi(n))})$ (for $n$ large enough), we get, for any $[s,s']$ in $\{t \in [0,T] : Y_{t-}  \in \mathcal{D}^\circ_t \text{ and } Y_{t}  \in \mathcal{D}^\circ_t  \}$,
\begin{equation*}
	\begin{split}
		\int_{s}^{s'}\mathds{1}_{\{Y_t \in \mathcal{D}^\circ_t\}}\left(dK_t+k_-(t,Y_t)dt\right)&=\int_{s}^{s'}	\mathds{1}_{\{Y_t \in \mathcal{D}^\circ_t\}}\left(k_-(t,Y_t)-\lim\limits_{n \rightarrow +\infty}k_{\phi(n)}(t,Y^{(\phi(n))}_t)\right)dt\\
		&\leq \int_{s}^{s'}	\mathds{1}_{\{Y_t \in \mathcal{D}^\circ_t\}}\left(k_-(t,Y_t)-\lim\limits_{n \rightarrow +\infty}k_{-}(t,Y^{(\phi(n))}_t)\right)dt=0.
	\end{split}
\end{equation*}
Consequently, we have
\begin{equation}\label{m3}
	\mathds{1}_{\{Y_t \in \mathcal{D}^\circ_t\}}\left(dK_t+k_-(t,Y_t)dt\right) \leq 0.
\end{equation}

Now, let us consider an arbitrary pair of optional processes $(\alpha_t,\beta_t)$ that belongs to $\operatorname{Gr}(k_t)$. The measure $(Y_t-\alpha_t)(dK_t+\beta_t dt)$ can be decomposed as follows
\begin{equation}\label{decom}
	\begin{split}
		(Y_t-\alpha_t)(dK_t+\beta_t dt)
		&= \underbrace{\mathds{1}_{\{Y_t \in \mathcal{D}^\circ_t\}}(Y_t-\alpha_t)(dK_t+\beta_t dt)}_{\mathsf{M}_1}
		+ \underbrace{\mathds{1}_{\{Y_t \notin \mathcal{D}_t\}}(Y_t-\alpha_t)(dK_t+\beta_t dt)}_{\mathsf{M}_2} \\
		&\quad + \underbrace{\mathds{1}_{\{Y_t =a_t,\, a_t \in  \mathcal{D}_t\}}(Y_t-\alpha_t)(dK_t+\beta_t dt)}_{\mathsf{M}_3}
	\end{split}
\end{equation}

$\bullet$ Due to the RCLL property of $Y$, the continuity of $K$, and the fact that the Lebesgue measure does not charge countable sets, we deduce that $\mathsf{M}_1=\mathds{1}_{\{Y_t \in \mathcal{D}^\circ_t \text{ and } Y_{t-} \in \mathcal{D}^\circ_t\}}(Y_t-\alpha_t)(dK_t+\beta_t dt)$. From \eqref{m1} and \eqref{m3}, we derive that $dK_t$ is absolutely continuous with respect to the Lebesgue measure on any interval included in the set $\{t : Y_{t-} \in \mathcal{D}^\circ_t \text{ and } Y_t \in \mathcal{D}^\circ_t \}$, with density $\Phi_t \in [-k(t,Y_t), -k_-(t,Y_t)]$ a.s. In other words, we have $(Y_t,-\Phi_t) \in \operatorname{Gr}(k_t)$ a.s. Recall that since $k_t(\cdot)$ is a maximal monotone operator, then
$$
(x,y) \in \operatorname{Gr}(k_t) \Leftrightarrow  (y-v)(x-u) \geq 0, \quad \forall (u,v) \in\operatorname{Gr}(k_t).
$$
Therefore, we derive that $(Y_t-\alpha_t)(-\Phi_t-\beta_t)dt$ is a positive measure, i.e., $\mathsf{M}_1=(Y_t-\alpha_t)(\Phi_t+\beta_t)dt =-(Y_t-\alpha_t)(-\Phi_t-\beta_t)dt$, which is a.s. negative on $[0,T]$.

$\bullet$ For $\mathsf{M}_2$, from \eqref{m2} and the fact that $Y_t$ takes values in $[a_t, +\infty[ \cap \mathbb{R}$, we deduce that $\mathsf{M}_2=\mathds{1}_{\{Y_t =a_t, \; a_t \notin \mathcal{D}_t\}}(a_t-\alpha_t)dK_t$. Since $(a_t-\alpha_t)$ takes negative values and $dK$ is an increasing measure, we obtain that $\mathsf{M}_2$ is a.s. negative in the sense of measures on $[0,T]$. 

$\bullet$ For $\mathsf{M}_3$, we can write
\begin{equation*}
	\begin{split}
		\mathsf{M}_3&=\mathds{1}_{\{Y_t =a_t,\, a_t \in  \mathcal{D}_t\}}(Y_t-\alpha_t)(dK_t+\beta_t dt)\\
		&=\underbrace{\mathds{1}_{\{Y_t =a_t,\, a_t \in  \mathcal{D}_t\}}(a_t-\alpha_t)(dK_t+k(t,Y_t) dt)}_{\mathsf{M}_{3,1}}+\underbrace{\mathds{1}_{\{Y_t =a_t,\, a_t \in  \mathcal{D}_t\}}(Y_t-\alpha_t)(\beta_t-k(t,Y_t))dt}_{\mathsf{M}_{3,2}}
	\end{split}
\end{equation*}
From \eqref{m1} and the fact that $(a_t - \alpha_t)$ takes negative values, we deduce that $\mathsf{M}_{3,1}$ is a.s. non-positive on $[0,T]$. On the other hand, using the monotonicity of the operator $k_t(\cdot)$ and the fact that $(Y_t, k(t,Y_t)) \in \operatorname{Gr}(k_t)$ on the set $\{Y_t = a_t,\, a_t \in \mathcal{D}_t\}$, we also obtain that $\mathsf{M}_{3,2}$ is a.s. non-positive on $[0,T]$.\\
Consequently, from \eqref{decom} and the non-positivity of $\mathsf{M}_1$, $\mathsf{M}_2$, and $\mathsf{M}_3$ on $[0,T]$, we obtain \eqref{eq4}.

This concludes the proof of Theorem \ref{main1}.
\end{proof}

\section{Extension to the general case where the graphs of $k_t(\cdot)$ are contained in $\mathbb{R} \times \mathbb{R}$}
\label{sec4}
In this section, we aim to discuss the extension of the existence result for a solution to the MBSDE$(\xi, f, k)$ stated in Theorem \ref{main1} to the general case of a moving barrier or boundary, where the graphs of the operators lie in $\mathbb{R} \times \mathbb{R}$. In this setting, the functions $k(t,\cdot)$ take values in $\mathbb{R}$. To proceed, we impose the following technical condition (C):
\begin{itemize}
	\item[(C)] We assume that there exists a function $\ell: [0,T] \times \mathbb{R} \to \mathbb{R}_+$ that is increasing and right-continuous in each of its variables, such that $\ell(t,x)>0$ for all $(t,x) \in [0,T) \times \mathbb{R}$, $\ell(T,x)=0$, and $|\ell(t,x)| \leq C (1+|x|)$ for all $(t,x)$. Moreover, for all $x \in \mathcal{D}_t$, we have
	$$
	\left(k(t,x)\right)^+ \leq \ell(t,x).
	$$
\end{itemize}
Assumption (C) ensures that, for every fixed $x$, the function $k(\cdot, x)$ is locally bounded from above by $\ell(\cdot,x)$ and vanishes only at the terminal time $T$. As an example, we may consider $k(t,x):=(T-t)x$ and $\ell(t,x)=(T-t)(1+x^+)$, which clearly satisfy all parts of assumption (C).

We are now ready to state our general existence result:
\begin{theorem}\label{main2}
	Under assumptions \text{(A)}, (B), and (C), there exists a unique solution $(Y_t, Z_t, \psi_t, K_t)_{t \leq T}$ to the MBSDE$(\xi,f,k)$.
\end{theorem}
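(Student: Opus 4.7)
My strategy for the proof is to reduce the general case to Theorem~\ref{main1} through a truncation-and-shift procedure, using condition~(C) to control the positive part $(k(t,\cdot))^+$.

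The plan is to approximate $k_t$ by a sequence of maximal monotone operators $\{k^n_t\}_{n \geq 1}$ whose graphs are contained in $\mathbb{R} \times \mathbb{R}_-$, so that Theorem~\ref{main1} applies to each approximating equation. Concretely, for $x \in [a_t, n] \cap \mathcal{D}_t$, set $k^n(t, x) := k(t, x) - \ell(t, n)$, and extend $k^n$ as the constant $k(t,n) - \ell(t,n)$ on $(n, +\infty)$. Since $\ell$ is increasing in $x$, condition~(C) yields $k^n(t, x) \leq 0$ on $\mathcal{D}_t$, and $k^n(t, \cdot)$ remains increasing and right-continuous. The shift is absorbed into the driver $f^n(t, y, z, \psi) := f(t, y, z, \psi) + \ell(t, n)$, which still satisfies assumption~(A) since $\ell(t, n) \leq C(1 + n)$ is bounded in $t$, and condition~(B) transfers to $k^n$ because the shift is deterministic and bounded. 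Theorem~\ref{main1} then produces a unique $(Y^n, Z^n, \psi^n, K^n)$ solving MBSDE$(\xi, f^n, k^n)$; setting $\hat{K}^n_t := K^n_t + \int_0^t \ell(s, n)\,ds$, the quadruple $(Y^n, Z^n, \psi^n, \hat{K}^n)$ satisfies the BSDE~\eqref{eq1} with the original driver $f$, and $\hat{K}^n$ is continuous, increasing, with $\hat{K}^n_0 = 0$.

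Uniform-in-$n$ estimates on $(Y^n, Z^n, \psi^n, \hat{K}^n)$ would then follow from Itô's formula applied to $e^{\zeta t}(Y^n_t - z)^2$ in the manner of Step~2 of the proof of Theorem~\ref{main1}, exploiting the linear growth of $\ell$ in $x$. The comparison Theorem~\ref{thm1} applied between the pairs $(k^n, k^{n+1})$ would give monotonicity of $\{Y^n\}$ in $n$, and these estimates would allow extraction of a limit $(Y, Z, \psi, K)$ in the appropriate Banach spaces. Uniqueness of any solution is immediate from Theorem~\ref{uniq}.

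The main obstacle will be the final verification that the limit solves MBSDE$(\xi, f, k)$, especially the measure condition~\eqref{eq4} with respect to the original (untruncated) graph $\operatorname{Gr}(k_t)$. The argument should parallel Steps~5--6 of the proof of Theorem~\ref{main1}, using the decomposition~\eqref{decom} and an a priori bound $Y_t \leq M_n$ (with $M_n \to +\infty$) to ensure that $Y$ stays in the region where $k^n_t$ encodes $k_t$ up to the absorbed shift; the monotonicity of $Y^n$ in $n$, together with the growth control from (C), will be the key to propagating the subdifferential inclusion through the limit. A parallel implementation, closer to the "concatenation" language of the introduction, partitions $[0,T]$ into subintervals on which $\ell$ is uniformly small (exploiting $\ell(T,\cdot)=0$), solves locally backward from $T$ via Theorem~\ref{main1}, and glues the local solutions using the endpoint value as the next terminal condition; this variant makes explicit the role of the vanishing of $\ell$ at the terminal time, but the critical verification step is the same.
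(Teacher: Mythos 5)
Your overall strategy --- truncate/shift $k$ so that Theorem \ref{main1} applies, undo the shift, and recover the general case --- is the right starting point and is how the paper's proof begins (it uses $k_n(t,\cdot):=k(t,\cdot)\wedge n$, $\widehat{k}^n:=k_n-n$, driver $f-n$, and $K^n:=\widehat{K}^n-nt$). However, two of your key steps do not go through as described. First, the bookkeeping of your shift is inconsistent: you shift the driver \emph{up} by $\ell(t,n)$ while shifting the operator \emph{down} by $\ell(t,n)$. Restoring \eqref{eq1} with driver $f$ forces $\hat K^n_t=K^n_t+\int_0^t\ell(s,n)\,ds$, whereas restoring the Skorokhod-type condition \eqref{eq4} for the unshifted (truncated) operator forces $\hat K^n_t=K^n_t-\int_0^t\ell(s,n)\,ds$; these are incompatible, so your recombined quadruple satisfies \eqref{eq1} but loses all contact with $\operatorname{Gr}(k_t)$, leaving no mechanism to propagate \eqref{eq4} to the limit. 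The paper avoids this by shifting the driver and the operator by the \emph{same} constant $-n$, so that the single substitution $K^n=\widehat{K}^n-nt$ repairs \eqref{eq1} and \eqref{eq4} simultaneously --- at the price that $K^n$ is then only of bounded variation rather than increasing (see the paper's closing remark).

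Second, and more fundamentally, the ``uniform estimates plus limit extraction in the appropriate Banach spaces'' step fails. The Step-2 estimate of Theorem \ref{main1} applied to your approximating equations produces bounds involving $\int_0^T\ell(s,n)^2\,ds$ and $\mathbb{E}\int_0^T|f^n(s,0,0,0)|^2\,ds$, both of order $n^2$, and there is no cancellation to rescue this: in the general case the solution genuinely need not satisfy $\mathbb{E}[|K_T|^2]<+\infty$ (the paper's final remark records only $\mathbb{P}$-a.s.\ finiteness of $K_T$ and $\int_0^T|Z_s|^2\,ds$). This is precisely why the paper does \emph{not} pass to a limit of the $Y^n$: it introduces the stopping times $\tau_n=\inf\{t\in[0,T]:\ell(t,Y^n_t)\le n\}\searrow 0$ and proves that consecutive truncated solutions \emph{coincide} on $[\tau_{n-1},T]$ (there the truncation is inactive since $\ell(t,Y^{n-1}_t)\le n-1$ bounds $(k)^+$ below the truncation level), so the global solution is obtained by concatenation on the intervals $[\tau_n,\tau_{n-1}]$ with no limiting procedure and no re-solving with new terminal conditions. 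Your alternative of a deterministic partition ``on which $\ell$ is uniformly small'' cannot work, since $\ell(t,x)\le C(1+|x|)$ is not small uniformly in $x$; without the random times and the coincidence lemma the gluing is unjustified. Finally, note that your comparison claim also needs repair: with your truncation, $k^n$ and $k^{n+1}$ are not ordered on $]n,n+1]$, so Theorem \ref{thm1} does not directly yield monotonicity of $\{Y^n\}_{n\ge1}$.
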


In order to prove Theorem \ref{main2}, we draw inspiration from the method used in \cite[page 119]{marois1990equations}. For every $t \in [0,T]$ and each given operator $k_t(\cdot)$, we aim to construct a sequence of operators $\{k^n_t(\cdot)\}_{n \geq 1}$ with the same domain $\mathcal{D}_t$ as $k_t(\cdot)$, associated with the increasing function defined by $k_n(t,\cdot):=k(t,\cdot) \wedge n$. Clearly, the graphs of these $\{k^n_t(\cdot)\}_{n \geq 1}$ are included in $\mathbb{R} \times ] -\infty, n]$. The idea is to first reduce ourselves to the situation of Theorem \ref{main1}, where the graphs are contained in $\mathbb{R} \times \mathbb{R}_-$. This can be achieved by constructing a family of increasing functions $\{\widehat{k}^n(t,\cdot)\}_{n \geq 1}$ with the same domain as $\{k^n(t,\cdot)\}_{n \geq 1}$, defined by $\widehat{k}^n(t,x) := k^n(t,x) \wedge n - n$ for every point $x$. The associated operators $\{\widehat{k}^n_t(\cdot)\}_{n \geq 1}$ then have their graphs contained in $\mathbb{R} \times \mathbb{R}_-$. Thus, by Theorem \ref{main1}, for each $n \geq 1$, there exists a unique quadruple of processes $(\widehat{Y}^n, \widehat{Z}^n, \widehat{\psi}^n, \widehat{K}^n)$ that solves the MBSDE$(\xi, f - n, \widehat{k}^n)$ and satisfies
$$
\mathbb{E}\left[\sup_{0 \leq t \leq T} |\widehat{Y}^n_t|^2 + \int_{0}^{T} \left( |\widehat{Z}^n_s|^2 + \|\widehat{\psi}^n_s\|^2_\pi \right) ds + |\widehat{K}^n_T|^2 \right] < +\infty.
$$

If we set $(Y^n_t, Z^n_t, \psi^n_t, K^n_t):=(\widehat{Y}^n_t, \widehat{Z}^n_t, \widehat{\psi}^n_t, \widehat{K}^n_t - nt)$ for every $t \in [0,T]$, then we obtain that $Y^n$ satisfies
$$
Y^n_t = \xi + \int_t^T f(s, Y^n_s, Z^n_s, \psi^n_s) ds - \int_t^T Z^n_s\, dW_s - \int_t^T\int_{\mathcal{U}} \psi^n_s(e)\, \tilde{N}(ds,de)+(K^n_T - K^n_t).
$$ 
Thus, \eqref{eq1} is clearly satisfied, and we can also see that $(Y^n, K^n)$ satisfies \eqref{eq2}--\eqref{eq3}. 

Now, let us consider an arbitrary pair of optional processes $(\alpha_t,\beta_t)$ that belongs to $\operatorname{Gr}(k^n_t)$. From the definition of $k^n_t(\cdot)$ and $\widehat{k}^n_t(\cdot)$, we clearly have $(\alpha_t,\widehat{\beta}^n_t):=(\alpha_t,\beta_t - nt) \in \operatorname{Gr}(\widehat{k}^n_t)$. Therefore,
$$
(Y^n_t - \alpha_t)(dK^n_t + \beta_t dt) = (\widehat{Y}^n_t - \alpha_t)(d\widehat{K}^n_t + \widehat{\beta}^n_t dt),
$$
which is a.s. negative on $[0,T]$ from \eqref{eq4}. Due to the uniqueness result (Theorem \ref{uniq}), we deduce that $(Y^n, Z^n, \psi^n, K^n)$ is the unique solution to the MBSDE$(\xi, f, {k}^n)$.

Finally, using assumption (C), we can piece together the various processes $(Y^n, Z^n, \psi^n, K^n)$, each defined locally, to construct a global process $(Y, Z, \psi, K)$ that satisfies the MBSDE$(\xi, f, k)$.

 \begin{proof}[Proof of Theorem \ref{main2}]
 We consider the sequence of operators $\{k^n_t(\cdot)\}_{n \geq 1}$ with the same domain $\mathcal{D}_t$ as $k_t(\cdot)$, associated with the increasing function defined by $k_n(t,\cdot):=k(t,\cdot) \wedge n$, as discussed above, and let $(Y^n, Z^n, \psi^n, K^n)$ denote the solution of the MBSDE$(\xi,f,{k}^n)$ for each $n \geq 1$. Since $k_{n+1}(t,x) \geq k_n(t,x)$ for all $x \in ]a_t; +\infty[$, it follows from the comparison Theorem \ref{thm1} that $Y^{n+1}_t \leq Y^n_t$ for all $t \in [0,T]$ a.s. 
 
 Following the proof of Theorem 4 in \cite{marois1990equations}, we can consider, for each fixed $t \in [0,T]$, a real-valued sequence $\{x_{n,t}\}_{n \geq 1}$ defined by:
 \begin{equation*}
 	x_{n,t} =
 	\left\lbrace 
 	\begin{split}
 		&\inf\left\{x \in \mathbb{R} : \ell(t,x) \geq n\right\},\\
 		&+\infty, \quad\qquad\text{ if } ~ \left\{x \in \mathbb{R} : \ell(t,x) \geq n\right\}=\emptyset.
 	\end{split}
 	\right. 
 \end{equation*}
 This sequence is increasing and satisfies $\lim\limits_{n \rightarrow +\infty} x_{n,t} = +\infty$, since $\ell(t,\cdot)$ takes finite values. Additionally, we define a sequence of stopping times $\{\tau_n\}_{n \in \mathbb{N}}$ as follows: for each $n \geq 1$,
 \begin{equation*}
 	\tau_{n} =
 	\left\lbrace 
 	\begin{split}
 		&\inf\left\{t \in [0,T] : \ell(t,Y^n_t) \leq n\right\},\\
 		&+\infty, \quad\qquad\text{ if } ~ \left\{t \in [0,T] : \ell(t,Y^n_t) \leq n\right\}=\emptyset.
 	\end{split}
 	\right. 
 \end{equation*}
 Since $Y^{n+1} \leq Y^n$ and $\ell$ is increasing in its second variable, we have $\ell(t, Y^{n+1}_t) \leq \ell(t, Y^n_t)$, which implies $\tau_{n+1} \leq \tau_n$. Hence, the sequence $\{\tau_n\}_{n \in \mathbb{N}}$ is decreasing and satisfies $\tau_0 = T$ by assumption (C). 
 
 Moreover, for any $t \in [0,T]$, using Markov's inequality, we have:
 \begin{equation*}
 	\begin{split}
 		\mathbb{P}\left(\tau_{n} > t\right) \leq \mathbb{P}\left(\ell(t,Y^n_t) > n\right)
 		&\leq \mathbb{P}\left(\ell(t,Y^1_t) > n\right)\\
 		&\leq \frac{1}{n^2} \mathbb{E}\left[\sup_{0 \leq t \leq T} |\ell(t,Y^1_t)|^2\right]\\
 		&\leq \frac{2C}{n^2} \left(1 + \mathbb{E}\left[\sup_{0 \leq t \leq T} |Y^1_t|^2\right]\right).
 	\end{split}
 \end{equation*}
 Consequently, since the quantity $\left(1 + \mathbb{E}\left[\sup_{0 \leq t \leq T} |Y^1_t|^2\right]\right)$ is finite, we conclude that $\tau_n \searrow 0$ a.s. as $n \rightarrow +\infty$.
 
Let $n > 1$. Remark that for $s \in ]\tau_{n-1}, T]$, we have $\ell(t,Y^{n-1}_t) \leq n-1$ a.s. Then, from the definition of the sequence $x_{n-1,s}$, we have $Y^{n-1}_s \leq x_{n-1,s}$ and in particular, we also get  $Y^{n}_s \leq x_{n-1,s}$ as $Y^{n}_s \leq Y^{n-1}_s$. Therefore, we get $\frac{Y^{n-1}_s+Y^{n}_s}{2} \leq x_{n-1,s}$, yielding that  $\ell\big(s,\frac{Y^{n-1}_s+Y^{n}_s}{2} \big) < n-1$. Consequently, from assumption (C), we have $k_{n-1}\big(s,\frac{Y^{n-1}_s+Y^{n}_s}{2} \big)=k\big(s,\frac{Y^{n-1}_s+Y^{n}_s}{2} \big)$. Similarly, as the sequence $\{x_{n,s}\}_{n \geq 1}$ is increasing we have $Y^{n}_s \leq Y^{n-1}_s \leq x_{n,s}$, thus $\ell\big(s,\frac{Y^{n-1}_s+Y^{n}_s}{2} \big) < n$ and then $k_{n}\big(s,\frac{Y^{n-1}_s+Y^{n}_s}{2} \big)=k\big(s,\frac{Y^{n-1}_s+Y^{n}_s}{2} \big)$. The quadruplet $(Y^n, Z^n, \psi^n, K^n)$ and $(Y^{n-1}, Z^{n-1}, \psi^{n-1}, K^{n-1})$ are respective solutions of MBSDE$(\xi,f, k_n)$ and MBSDE$(\xi,f, k_{n-1})$, respectively. By following the above argumentation, and performing the same arguments as the one used in the proof of Lemma \ref{lemma1}, we can derive that the measure $\mathds{1}_{\{\tau_{n-1}< s\leq T\}}(Y^{n-1}_s-Y^{n}_s)(dK^{n-1}_s-dK^{n}_s)$ is a.s. negative. Additionally, by the continuity of $K^n$ and $K^{n+1}$, we can deduce that $\mathds{1}_{\{\tau_{n-1} \leq s \leq  T\}}(Y^{n-1}_s-Y^{n}_s)(dK^{n-1}_s-dK^{n}_s) \leq 0$ a.s.\\
By applying It\^o's formula to $(t, x) \mapsto e^{\zeta t } (Y^{n-1}_t-Y^{n}_t)^2$ on the time interval $[\tau_{n-1}, T]$ and re-performing similar calculations as in Theorem \ref{uniq} on the other hand, we can derive that $(Y^n_t, Z^n_t, \psi^n_t, K^n_t)=(Y^{n-1}_t, Z^{n-1}_t, \psi^{n-1}_t, K^{n-1}_t)$ for all $t \in [\tau_{n-1},T]$.\\
Next let us introduce the following processes:
\begin{equation*}
	\begin{split}
		Y_t &=\sum_{n=1}^{+\infty} \mathds{1}_{[\tau_{n}, \tau_{n-1}]}(t) Y^n_t,\quad Z_t=Z^1_t \mathds{1}_{]\tau_{1},T]}+\sum_{n=2}^{+\infty} \mathds{1}_{]\tau_{n}, \tau_{n-1}]}(t) Z^{n}_t\\
		\psi_t&=\psi^1_t \mathds{1}_{]\tau_1,T]}(t)+\sum_{n=2}^{+\infty} \mathds{1}_{]\tau_{n}, \tau_{n-1}]}(t) \psi^{n}_t,\quad K_t=\sum_{n=1}^{+\infty} \mathds{1}_{[\tau_{n}, \tau_{n-1}]}(t) K^n_t,\qquad t \in [0,T].
	\end{split}
\end{equation*}
Since the values at $t=0$ do not affect the stochastic integrals $\int_0^\cdot Z_s dW_s$ or $\int_0^\cdot \int_\mathcal{U} \psi_s(e) \tilde{N}(ds,de)$, we may, without loss of generality, set $Z_0=0$ and $\psi_0=0$.

The construction of $(Y, Z, \psi, K)$ consists simply of the concatenation of $\{Y^n, Z^n, \psi^n, K^n\}$, which, by its very definition, clearly satisfies the MBSDE$(\xi,f,k)$.
 \end{proof}
\begin{remark}
Compared to the definition of the solution given in \ref{def}, we can observe that the constructed solution in the proof of Theorem \ref{main2}, under the additional assumption (C), satisfies
$$
\int_{0}^{T} \left( |Z_s|^2 + \|\psi_s\|^2_\pi \right) ds + K_T < +\infty, \quad \mathbb{P}\text{-a.s.}
$$
where $K$ is continuous and has bounded variation on $[0,T]$ with $K_0=0$. Under this integrability, the uniqueness result stated in Theorem \ref{uniq} remains valid, even in the presence of integrability issues. Indeed, the same Itô's formula can still be applied, together with Lemma \ref{lemma1}, and by employing a localization argument for the local martingale part, followed by the use of Fatou's and Gronwall's lemmas, we obtain the uniqueness of the solution.

The approach adopted in this section brings to mind the notion of local solutions for classical BSDEs (without the associated maximal monotone operators $k_t(\cdot)$) with double reflection, as extensively studied in the literature. In those works, the concatenation of solutions defined on each subinterval of $[0,T]$ is used to construct a global solution that exhibits the same lack of integrability, with square-integrability being ensured only locally (see, e.g., \cite{hamadene2010backward,HassaniSaid,hamadene2009bsdes}).
\end{remark}

%\bibliographystyle{abbrv}
%\bibliography{bibliography}
\end{document}